\newtheorem{thm}{Theorem}
\newtheorem{lem}[thm]{Lemma}
\newtheorem{cor}[thm]{Corollary}
\newtheorem*{cor*}{Corolario}
\newtheorem*{thm*}{Theorem}
\newtheorem{ex}[thm]{Example}
\theoremstyle{definition}
\newtheorem{definition}{Definition}
\newtheorem{prop}{Proposition}
\newtheorem*{prop*}{Proposition}
\theoremstyle{remark}
\newtheorem{rem}{\textbf{Remark}}
\theoremstyle{remark}
\DeclareMathOperator{\Gr}{Gr}
\DeclareMathOperator{\supp}{supp}
\newcommand{\C}{\mathbb{C}}
\newcommand{\R}{\mathbb{R}}
\newcommand{\Z}{\mathbb{Z}}
\newcommand{\N}{\mathbb{N}}
\renewcommand{\P}{\mathbb{P}}
\DeclareMathOperator{\In}{in}
\DeclareMathOperator{\trop}{trop}
\newcommand{\I}{\mathcal{I}}
\newcommand{\V}{\mathfrak{v}}
\renewcommand{\a}{\mathbf{a}}
\renewcommand{\b}{\mathbf{b}}
\newcommand{\x}{\mathbf{x}}
\newcommand{\w}{\mathbf{w}}
\newcommand{\xa}{{\x}^{\a}}
\newcommand{\xb}{{\x}^{\b}}
\renewcommand{\sl}{\mathfrak{sl}}
\newcommand{\p}{\mathbf{p}}
\DeclareMathOperator{\Ht}{ht}
\newcommand{\U}{\mathbf{u}}
\DeclareMathOperator{\Mon}{Mon}
\DeclareMathOperator{\im}{im}
\title{Birational sequences for the Grassmannian $\Gr\left(3,n\right)$}
\author{Joaquin Torres Henestroza}
\date{2025}
\begin{document}

\maketitle
\begin{abstract}
    Following the ideas of Bossinger and Fang, Fourier, and Littelman, we study \emph{iterated sequences} for the Grassmannian $\Gr\left(3,n\right)$ as a special class of birational sequences. For each iterated sequence, there is a weighting matrix $M_S$ corresponding to a valuation on the rational coordinate ring and we show that the initial form of a Plücker relation $\In_{M_S}\left(R_{I,J}\right) $ is binomial. We show that, in some cases, the cones $C_S$ in the tropical Grassmannian that satisfy $\In_{M_S}\left(\I_{3,n}\right)=\In_{C_S}\left(\I_{3,n}\right)$ only depend on the first two indices used in each iteration. In the case of $\Gr\left(3,6\right)$, these cones are obtained computationally and are classified up to automorphism induced by the symmetric group $S_6$.  
\end{abstract}
\medskip
\noindent
\textbf{MSC (2020):} 14M15, 14D06.

\tableofcontents

\vspace{0,5cm} 

In the context of algebraic geometry, toric varieties are especially popular for their connections between their geometric properties and combinatorial objects associated to them. These connections can be extended to certain projective varieties if there exists a \emph{toric degeneration}. Formally, given a (projective) variety $X$, a \emph{toric degeneration} is a flat family $\pi:\mathcal{X}\rightarrow \mathbb{A}_{\C}^1$ such that $\pi^{-1}\left(t\right)$ is isomorphic to $X$ for $t\neq 0$, and $\pi^{-1}\left(0\right)$ is a toric variety. If a variety $X$ admits said degeneration, then both $X$ and the toric variety $\pi^{-1}\left(0\right)$ share common properties (cf. \cite{bossinger2023surveytoricdegenerationsprojective}), thus the next questions to ask are: for which varieties is it possible to construct such degenerations and how can these degenerations actually be constructed? 

In pursuit of partial answers to these questions, the goal of this paper is the construction of toric degenerations which arise as Gröbner degenerations of the Grassmannian $\Gr\left(3,n\right)$. In order to do this, the usual approach is computing initial ideals of the Plücker ideal $\I_{3,n}$ with respect to weight vectors in the tropical Grassmannian $\trop\left(\Gr(3,n)\right)$. Thus, the problem of finding degenerations reduces, in this case, to finding constructions of weight vectors. In the case of the Grasmannian $\Gr\left(3,6\right)$ such constructions can be done, for example, using plabic graphs (cf. \cite{bossinger2016toricdegenerationsgr2ngr36}), matching fields (cf. \cite{Mohammadi_2019}), or by considering \emph{birational sequences}: for $n\geq 4$, let $\Phi$ denote the $\mathbf{A}_{n-1}$ type root system and for each positive root $\beta\in \Phi^+$, consider the one-parameter root subgroup $U_{\beta}\subset U^+$, then a sequence of positive roots $S= \left(\beta_1,\dots,\beta_N\right)$ is a \emph{birational sequence} for $\Gr\left(3,n\right)$ if the multiplication map
\begin{equation*}
    \psi_S := \operatorname{mult}:U_{\beta_N}\times \dots \times U_{\beta_1} \rightarrow U^+
\end{equation*} has image birational to $\Gr\left(3,n\right)$ (see Definition \ref{birseqs}). This last approach, introduced by Fang, Fourier, and Littelman in \cite{FANG2017107} and for the case of a Grassmannian by Bossinger (cf. \cite{Bossinger:2021}), allows to give coordinates on the Grassmannian that gives rise to a valuation on the rational coordinate ring $\C\left(\Gr(3,n)\right)\setminus \lbrace 0\rbrace$, and thus to the weighting matrix $M_S$. By \cite[Theorem 1]{Bossinger:2021}, there is a weight vector $w_S\in \trop\left(\Gr(3,n)\right)$ such that the initial ideal with respect to $M_S$ and the initial ideal with respect to $w_S$ coincide: $\In_{w_S}\left(\I_{3,n}\right) = \In_{M_S}\left(\I_{3,n}\right)$. 

In \cite[Definition 11]{Bossinger:2021}, the special class of \emph{iterated sequences} is introduced. This special kind of birational sequences have the advantage that they are constructed inductively, and therefore the calculations can be implemented computationally. This advantage is used for arbitrary Grassmannians $\Gr(3,n)$, first in Lemma \ref{homogeneidad} and Corollary \ref{only-depends-on-lex}. A direct consequence of these results is Proposition \ref{binomial}. 

\begin{prop*}
    Let $S$ be an iterated sequence for $\Gr\left(3,n\right)$ and $R_{I,J}\neq 0$ a Plücker relation (see Equation \ref{PR}). Then its initial form $\In_{M_S}\left(R_{I,J}\right)$ is binomial.
\end{prop*}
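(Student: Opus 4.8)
The plan is to pin down the $M_S$-weights of the monomials occurring in $R_{I,J}$ with the help of the results already established, and then to verify that the extreme weight among them is attained by exactly two of them. Write $I=\{a<b\}$ for the two-element index set and $J=\{j_1<j_2<j_3<j_4\}$ for the four-element one, so that, up to an overall sign,
\[
R_{I,J}=\sum_{k=1}^{4}(-1)^{k}\,p_{I\cup\{j_k\}}\,p_{J\setminus\{j_k\}}=:\sum_{k=1}^{4}(-1)^{k}m_k ,
\]
with the convention $p_{\sigma}=0$ unless $\sigma$ is a $3$-subset. A quick check shows that $R_{I,J}\neq 0$ forces $|I\cap J|\le 1$: when $I\subseteq J$ the relation vanishes identically. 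Hence either $I\cap J=\emptyset$, in which case all four $m_k$ are nonzero and pairwise distinct monomials, or $|I\cap J|=1$, in which case exactly three of them survive and are again pairwise distinct. In both cases all surviving $m_k$ have the same multidegree for the torus $(\C^*)^n$ rescaling the Plücker coordinates, since both factors together involve every index of $I\cup J$ the same number of times, whichever $j_k$ is transferred out of $J$.

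I would then split the claim into two inclusions. That the extreme $M_S$-weight is attained \emph{at least} twice is formal: by \cite[Theorem 1]{Bossinger:2021} we have $\In_{M_S}(\I_{3,n})=\In_{w_S}(\I_{3,n})$ with $w_S\in\trop(\Gr(3,n))$, and a weight vector lies in the tropical Grassmannian exactly when the corresponding initial ideal of $\I_{3,n}$ is monomial-free; since $R_{I,J}\in\I_{3,n}$, were $\In_{M_S}(R_{I,J})$ a monomial it would lie in the monomial-free ideal $\In_{M_S}(\I_{3,n})$, a contradiction, so at least two of the $m_k$ realise the extreme weight. This already rules out the one-term case.

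For the opposite inclusion I would use the structural results to turn the comparison of the $M_S(m_k)$ into a finite combinatorial one. By Lemma~\ref{homogeneidad} the weighting is homogeneous, so the computation of $M_S(m_k)$ happens inside the single multidegree class to which all surviving $m_k$ belong; restricted there, Corollary~\ref{only-depends-on-lex} tells us that $M_S(m_k)$ is controlled by only a bounded amount of leading lexicographic data of the two Plücker coordinates $I\cup\{j_k\}$ and $J\setminus\{j_k\}$ — the indices singled out by the first two entries of each iteration of $S$. Since $J\setminus\{j_k\}$ runs over the $3$-subsets of $J$ missing one element, and $I\cup\{j_k\}$ always contains $I$, this leading data varies in $k$ in a controlled, piecewise-monotone way: the relevant data of $J\setminus\{j_k\}$ changes only for small $k$ and then stabilises, while that of $I\cup\{j_k\}$ changes only as $j_k$ crosses $a$ or $b$. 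Running the (short) case analysis over the interleavings of $\{a,b\}$ with $j_1<\dots<j_4$ then shows that exactly two of the surviving monomials carry the extremal leading data, hence the extremal $M_S$-weight; combined with the previous paragraph, the extreme locus consists of exactly two monomials, and since these are distinct, $\In_{M_S}(R_{I,J})$ is a genuine binomial.

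The step I expect to be the real work is that last case analysis: describing the leading data of $I\cup\{j_k\}$ and $J\setminus\{j_k\}$ in a form uniform over all positions of $\{a,b\}$ relative to $j_1<\dots<j_4$ — including the degenerate case $|I\cap J|=1$, where one monomial drops out and the stabilisation of the $J$-data is shifted — so that ``exactly two'' follows without re-arguing each configuration separately. Everything around it — the reduction to a single multidegree via Lemma~\ref{homogeneidad}, the closed description from Corollary~\ref{only-depends-on-lex}, and the ``at least two'' inclusion from \cite[Theorem 1]{Bossinger:2021} — is formal.
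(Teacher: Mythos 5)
Your ``at least two'' half is fine: since $R_{I,J}\in\I_{3,n}$ and $\In_{M_S}(\I_{3,n})=\In_{w_S}(\I_{3,n})$ is monomial-free by \cite[Theorem 1]{Bossinger:2021} together with the Fundamental Theorem of Tropical Geometry, the initial form cannot be a single term; the paper uses the same observation to motivate the proposition but does not need it in the proof. The genuine gap is in the ``at most two'' half, which is the actual content of the statement (one must rule out trinomial and quadrinomial initial forms). You defer this to a ``short case analysis over the interleavings of $\{a,b\}$ with $j_1<\dots<j_4$'', but that framing is not correct: the weights $M_S\mathbf{m}_k$ and their comparison depend essentially on the indices $(l_1,l_2,l_3)$, $(h_1,h_2,h_3)$, $(b_1,b_2,b_3),\dots$ chosen at each iteration of $S$, not only on the relative position of $I$ inside $I\cup J$. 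For instance, already in the base case $\Gr(3,5)$ with $I=(r,5)$, $J=(s_1,s_2,s_3,r)$, which two products tie is governed by $t=\min\{t':l_{t'}\neq r\}$ and by whether $l_t$ lies in $(s_1,s_2,s_3)$; Corollary \ref{only-depends-on-lex} does not reduce $\V_S(\overline{p_K})$ to ``a bounded amount of leading lexicographic data of $K$'' — it says the valuation is the lex-maximum over the support of $\phi_S^*(\overline{p_K})$, which is computed recursively through all the iteration indices. So the claimed piecewise-monotone behaviour in $k$ is unsubstantiated, and the finite list of configurations you propose to check is indexed by the wrong data.

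The paper closes exactly this gap by induction on $n$ along the iterated structure, split into the $3$-term and $4$-term cases: after an explicit base case ($\Gr(3,5)$, resp.\ $\Gr(3,6)$) analysed according to where the first iteration indices sit relative to $I$ and $J$, the inductive step shows that the valuations $\V_S(\overline{p_{I\cup s}p_{J\setminus s}})$ share a common first block (such as $(f_t,\cdot)$ or $(2f_1,\cdot)$), and the remaining blocks are precisely the $\V_{S'}$-valuations of the terms of a Plücker relation for $\Gr(3,n-1)$, namely $R_{I\setminus n\cup h_t,J}$ or $R_{I\setminus n\cup h_1,J\setminus n\cup h_1}$ or $R_{I,J\setminus n\cup h_1}$; binomiality then follows from the inductive hypothesis. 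If you want to keep your two-inclusion structure, you still need an argument of this recursive type (or a genuinely uniform computation of the valuations) for the upper bound; as written, the key step is asserted rather than proved, and its proposed reduction to interleavings of $I$ and $J$ alone would fail.
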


The construction of weight vectors in the tropical Grassmannian relies on the construction of \emph{order preserving projections} (see Section \ref{sec3}). Since order preserving projections must preserve the inequalities obtained when computing the initial forms of Plücker relations (with respect to a weighting matrix $M_S$), a first step in the classification of initial ideals with respect to weighting matrices is given by Propositions  \ref{perm-equality} and \ref{change-equality}.

\begin{prop*}
    Let $S_1$ and $S_2$ be two iterated sequences for $\Gr\left(3,n\right)$. Assume that the first two indices used in each iteration coincide. Given a projection $e_{S_1}:\Z^{3(n-3)}\rightarrow \Z$ that preserves inequalities with respect to $S_1$, there exists a projection $e_{S_2}:\Z^{3(n-3)}\rightarrow \Z$ that preserves inequalities with respect to $S_2$, and satisfies
    \begin{equation*}
        e_{S_1}M_{S_1} = e_{S_2}M_{S_2}. 
    \end{equation*}
\end{prop*}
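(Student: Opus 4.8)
The plan is to relate the two weighting matrices by a unimodular change of rows and to pull the order preserving projection back along it. Recall that for an iterated sequence $S$ the rows of $M_S$ are organized by iteration: the $k$-th iteration contributes three rows, the valuations along the three one-parameter subgroups used at that step, say indexed by an ordered triple $(a_k,b_k,c_k)$ with $(a_k,b_k)$ the two initial indices; by hypothesis $(a_k,b_k)$ is the same for $S_1$ and $S_2$, while $c_k$ may change. Call the rows coming from the first two subgroups of every iteration the \emph{primary rows} and the remaining $n-3$ rows the \emph{secondary rows}. Since a primary row is computed from the chart determined by the subsequence preceding it together with the data $(a_k,b_k)$, the primary block is literally the same matrix for $S_1$ and $S_2$; the argument will upgrade this to a triangular comparison of the \emph{full} matrices.

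The heart of the matter is the following claim: there is a matrix $A\in\GL_{3(n-3)}(\Z)$, unipotent and lower triangular with respect to the order in which the roots of $S_1$ occur (hence acting as the identity on the primary rows), with
\begin{equation*}
    M_{S_2}=A\,M_{S_1}.
\end{equation*}
That is, replacing the third one-parameter subgroup of an iteration by another admissible one perturbs only the corresponding row of $M_S$, and only by an integer combination of the rows occurring before it, leaving in particular all primary rows and all later iterations untouched. I would prove this by unwinding the inductive definition of the birational chart attached to an iterated sequence: exchanging the third factor in the $k$-th iteration is governed by the commutator relations in $U^+$, which express such an exchange as a triangular change of the chart variables supported on the parameters of that iteration, so the induced change on the valuations has exactly the stated triangular shape. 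Here Lemma \ref{homogeneidad} and Corollary \ref{only-depends-on-lex} are the inputs that confine the effect to a single row and guarantee the perturbation lies in the span of the rows below it (this homogeneity is also what forced $\In_{M_S}(R_{I,J})$ to be binomial in Proposition \ref{binomial}). I expect this to be the main obstacle: localizing the change to one row, exhibiting its triangularity, and checking that the subsequent iterations are genuinely unaffected; everything else is formal.

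Granting the claim, set $e_{S_2}:=e_{S_1}A^{-1}$. Since $A$ is unimodular, $A^{-1}$ is an integer matrix, so $e_{S_2}$ is again an integral projection $\Z^{3(n-3)}\to\Z$. Because $A$ is unipotent and lower triangular for the coordinate order used to define $\In_{M_S}(\,\cdot\,)$, for any nonzero $w\in\Z^{3(n-3)}$ the vector $Aw$ has the same first nonzero entry as $w$; hence $A$ preserves the lexicographic comparison. Therefore the $M_{S_2}$-weights of the terms of any Plücker relation are the images under $A$ of the corresponding $M_{S_1}$-weights and are selected in the same way, so $\In_{M_{S_2}}(R_{I,J})=\In_{M_{S_1}}(R_{I,J})$; concretely, the linear equalities and strict inequalities that ``preserving inequalities with respect to $S_2$'' imposes are obtained from those for $S_1$ by replacing each occurring vector $v\in\Z^{3(n-3)}$ (an $M_{S_1}$-weight of a term, or a difference of two such) with $Av$. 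For each such $v$ one has $e_{S_2}(Av)=(e_{S_1}A^{-1})(Av)=e_{S_1}(v)$, so $e_{S_1}$ being order preserving with respect to $S_1$ forces $e_{S_2}$ to be order preserving with respect to $S_2$; and if the definition in Section \ref{sec3} also requires compatibility with the ambient lexicographic order, that too is inherited since $A$ is unipotent and lower triangular. Finally
\begin{equation*}
    e_{S_2}M_{S_2}=e_{S_1}A^{-1}\,A\,M_{S_1}=e_{S_1}M_{S_1},
\end{equation*}
which is the asserted identity; in particular $e_{S_2}M_{S_2}\in\trop(\Gr(3,n))$, being equal to the weight vector attached to $e_{S_1}$. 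This reduces the proposition to the triangular change-of-rows claim above.
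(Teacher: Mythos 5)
Your formal skeleton is fine, and once your central claim is granted your construction coincides with the paper's: for a single change of the third index the relating matrix is $A=\Id+\delta\,e_{r}^{T}$, where $r$ is the position of the replaced root and $\delta$ is the difference of the two continuations, and then $e_{S_2}=e_{S_1}A^{-1}$ is exactly the projection the paper writes down in the proof of Proposition \ref{change-equality} (it alters only the $r$-th entry of $e_{S_1}$ by $e_{S_1}(\delta)$). The genuine gap is that this central claim, $M_{S_2}=AM_{S_1}$ with $A$ unipotent lower triangular, is where all the content lies, and you explicitly defer it; moreover the route you sketch for it rests on assertions that are false. The two charts are not related by reordering the same root subgroups, so Chevalley commutator relations in $U^+$ do not apply: replacing $\epsilon_i-\epsilon_n$ by $\epsilon_j-\epsilon_n$ is a substitution of a different root subgroup, not a reordering. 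The correct (and much simpler) reason, which is the paper's, is columnwise, not rowwise: the replaced third root enters the valuation of exactly one Plücker coordinate, namely $p_{(i_1,i_2,n)}$ for the outermost iteration, so the row of that root in $M_{S_1}$ is the $0/1$ indicator of that single column, only that column of $M_{S}$ changes, and $M_{S_2}=(\Id+\delta e_r^T)M_{S_1}$ follows at once. Your description of the structure is inaccurate on several points: the ``primary block'' is \emph{not} literally the same matrix for $S_1$ and $S_2$ (take $n=6$, $S'=(\epsilon_3-\epsilon_5,\epsilon_4-\epsilon_5,\epsilon_2-\epsilon_5,\beta_1,\beta_2,\beta_3)$ and outer triads with indices $(1,2,5)$ versus $(1,2,3)$: then $\V_{S_1}(\overline{p_{(1,2,6)}})=f_3+f_4$ while $\V_{S_2}(\overline{p_{(1,2,6)}})=f_3$, so the primary row of $\epsilon_3-\epsilon_5$ differs and $A$ is not the identity on primary rows); the perturbation is not confined to ``the corresponding row'' but to the rows in the support of $\delta$, which lie \emph{below} the replaced root and each receive a multiple of its row. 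These errors do not break your final deduction, since only unipotence and lower-triangularity are used there, but they show the claim you rely on still has no proof in your write-up, and the proof it needs is precisely the one-column observation above.

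Two smaller points. First, for the general statement (several iterations differing in their third indices) you must compose such matrices through intermediate sequences; this works, but for an inner iteration the replaced root is used by several Plücker coordinates (all whose reduced index at that stage is $(i_1^{(m)},i_2^{(m)},m)$), and one must check they all change by the same $\delta$ — again a columnwise argument, absent from your sketch. Second, if the statement is read so as to include differing orderings of the base PBW factor (the content of Proposition \ref{perm-equality}), your lex-preservation argument does not cover it: there the relating matrix is a permutation of the last three rows, which does not preserve lexicographic comparison of arbitrary vectors; one needs the paper's Remark \ref{obs1}, that comparisons among terms of a Plücker relation are decided before the last triad, to transfer the inequalities in that case.
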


In the $\Gr\left(3,6\right)$ case, the computations are done by implementing Algorithm \ref{algorithm} to find all inequalities arising from the initial forms of Plücker relations and \verb|polymake| (see \cite{polymake:2000}) to compute the weight vectors; then, the initial ideals are computed  and classified, up to the action of $S_6$, using \verb|macaulay2| (see \cite{M2}). The results are the contents of Theorems \ref{prim-class}, \ref{second_class}, and Corollary \ref{class_toric}, briefly stated below.


\begin{thm*}
    Let $\mathcal{S}_{3,6}:=\left\lbrace \textrm{iterated sequences for }\Gr(3,6)\right\rbrace$. The map $S\mapsto \In_{M_S}\left(\I_{3,6}\right)$ is a map
    \begin{equation*}
        \mathcal{S}_{3,6}\rightarrow \left\lbrace \textrm{initial toric ideals of }\Gr(3,6)\right\rbrace.
    \end{equation*} The image has cardinality $\#\left\lbrace \In_{M_S}(\I_{3,6}):S\in \mathcal{S}_{3,6}\right\rbrace=240$. Under the action of $S_6\leq \operatorname{Aut}(\I_{3,6})$, there exist in $\left\lbrace \In_{M_S}(\I_{3,6}):S\in \mathcal{S}_{3,6}\right\rbrace$ 4 different equivalence classes $O_1,O_2,O_3,$ and $O_4$, listed with its cardinalities and its corresponding isomorphism classes
\begin{table}[H]
    \centering
    \begin{tabular}{|c|c|c|}
        \hline
        Orbit & $\#$ & Isomorphism class \\
        \hline
        $O_1\cap\mathbb{I}_{3,6}$ & $48$ & $ EEFF1 $\\
        \hline
        $O_2\cap\mathbb{I}_{3,6}$ & $48$ & $EFFG$ \\
        \hline 
        $O_3\cap\mathbb{I}_{3,6}$ & $48$ & $EEFF2$\\
        \hline
        $O_4\cap\mathbb{I}_{3,6}$ & $96$ & $EEFG$\\
        \hline
    \end{tabular}
    \caption*{Classification of the $G$-orbits in $\mathbb{I}_{3,6}$}
\end{table}

    Furthermore, every iterated sequence induces a toric degeneration of $\Gr\left(3,6\right)$, and these are classified according to the isomorphism classes mentioned above.
\end{thm*}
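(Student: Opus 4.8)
The plan is to reduce the statement to a finite, explicit computation and then carry it out with \verb|polymake| and \verb|macaulay2|, exactly as indicated after the statement.

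First I would check that the assignment $S \mapsto \In_{M_S}(\I_{3,6})$ is well defined with target the set of initial toric ideals of $\Gr(3,6)$. By \cite[Theorem 1]{Bossinger:2021} there is a weight vector $w_S \in \trop(\Gr(3,6))$ with $\In_{w_S}(\I_{3,6}) = \In_{M_S}(\I_{3,6})$; in particular this initial ideal contains no monomial. By Proposition \ref{binomial} the initial form of every Plücker relation is binomial, and since the quadratic Plücker relations generate $\I_{3,6}$, one verifies (e.g.\ by a weighted Gröbner basis computation in \verb|macaulay2|, as the quadratic relations are not automatically a universal Gröbner basis for $\Gr(3,n)$) that their initial forms generate $\In_{w_S}(\I_{3,6})$, so the initial ideal is generated by binomials; checking primeness in \verb|macaulay2| then shows it is a toric ideal.

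Second, I would make the problem finite: by Corollary \ref{only-depends-on-lex} together with Propositions \ref{perm-equality} and \ref{change-equality}, the ideal $\In_{M_S}(\I_{3,6})$ depends only on the pair of first two indices used in each of the iterations defining $S$, so it suffices to range over the finitely many such tuples. For each tuple, Algorithm \ref{algorithm} outputs the complete list of linear inequalities that an order-preserving projection must satisfy — equivalently, the facet inequalities of the cone $C_S \subseteq \trop(\Gr(3,6))$ — which I feed to \verb|polymake| to obtain the rays of $C_S$ and a weight vector $w$ in its relative interior; then \verb|macaulay2| computes $\In_w(\I_{3,6}) = \In_{M_S}(\I_{3,6})$ and stores it. Comparing the resulting list of ideals yields the cardinality $240$. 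Next, $S_6$ acts by permuting the indices $1,\dots,6$, hence the Plücker variables $p_{ijk}$, hence $\I_{3,6}$ and the $240$ ideals; computing the orbit decomposition gives the four classes $O_1,\dots,O_4$ with cardinalities $48,48,48,96$ (summing to $240$, a useful consistency check), and for one representative of each orbit I would identify the isomorphism type of the associated toric variety via an invariant fine enough to separate the four cases — e.g.\ the associated lattice polytope up to unimodular equivalence, or by matching $C_S$ against the known list of combinatorial types of maximal cones of $\trop(\Gr(3,6))$, from which the labels $EEFF1, EFFG, EEFF2, EEFG$ are read off. Finally, for any $w$ with $\In_w(\I_{3,6})$ a prime binomial ideal, the standard Gröbner degeneration $\mathcal{X} = \operatorname{Proj}\big(\C[p][t]/\widetilde{\I}\big) \to \mathbb{A}^1$, with $\widetilde{\I}$ the homogenization of $\I_{3,6}$ with respect to $w$, is flat with generic fibre isomorphic to $\Gr(3,6)$ and special fibre the irreducible toric variety $V(\In_w\I_{3,6})$; applied to $w = w_S$ this proves every iterated sequence induces a toric degeneration, classified by the isomorphism class of its special fibre.

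The main obstacle is the reliability and completeness of the computation: one must be certain the enumeration of iterated sequences via the first-two-indices reduction is exhaustive, that the inequality system produced by Algorithm \ref{algorithm} cuts out exactly $C_S$ rather than a larger cone, and — crucially for the toric-degeneration conclusion — that each of the $240$ binomial initial ideals is genuinely prime and not merely binomial, since a non-radical binomial initial ideal would break the argument. A secondary difficulty is separating the four isomorphism classes, since coarse invariants such as the Hilbert series need not distinguish them, so one may have to exhibit explicit unimodular equivalences of the corresponding polytopes (equivalently, monomial isomorphisms of the toric ideals); one should also confirm that the $S_6$-action is compatible with the first-two-indices reduction so that the orbit counts are internally consistent.
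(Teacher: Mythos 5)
Your proposal is correct and follows essentially the same route as the paper: use Propositions \ref{perm-equality} and \ref{change-equality} (with the reduced Gröbner basis verification) to reduce to finitely many cases indexed by the first two indices of each iteration, run Algorithm \ref{algorithm} with \verb|polymake| to get weight vectors and \verb|macaulay2| to compute, count, and $S_6$-classify the 240 toric initial ideals against the Speyer--Sturmfels classes, and conclude with the standard Gröbner (toric) degeneration. The verification points you flag (that the Plücker relations' initial forms really generate, i.e.\ form a reduced Gröbner basis, and that each binomial initial ideal is prime) are exactly the checks the paper performs computationally, the former via comparing minimal numbers of generators.
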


In order to present and prove these results, the paper is divided as follows: in Section \ref{sec1}, the notation and fundamental definitions and results are introduced; in Section \ref{sec2} the birational and iterated sequences are defined, the lowest-term valuation is defined, a proof of Lemma \ref{homogeneidad} is given and Algorithm \ref{algorithm} is described; finally, in Section \ref{sec3} the main results are presented.

\textbf{Acknowledgements:} The results presented in this paper are part of my BSc thesis supervised by Lara Bossinger at the Universidad Nacional Autónoma de México (UNAM). I am grateful for her advice and support during the realization of this work. I would also like to thank Juan González López for his help during the computational implementation.

\section{Definitions and notation}\label{sec1}
In this section, the general notation and definitions that will be used throughout this work will be introduced. Furthermore, a brief reminder on tropical geometry is presented in order to relate, in the next sections, the weighting matrices (obtained naturally from birational sequences) to weight vectors in the tropical Grassmannian.

Given a positive integer $n\in \Z^+$, define $\left[n\right]:=\left\lbrace 1,\dots,n\right\rbrace$. Given $k,n\in \Z^+$, it will be considered that $0<k<n$, unless otherwise stated. Let $I_{k,n}:=\left\lbrace \lbrace i_1<\dots<i_k\rbrace:(\forall l=1,\dots,k)(i_l\in [n])\right\rbrace$.

Given two vector spaces $V$ and $W$, write $V\leq W$ if $V$ is a subspace of $W$. For $k,n\in \Z^+$, the \emph{Grassmannian} of $k$-planes in $\C^n$ is defined as
\begin{equation*}
    \Gr\left(k,n\right) := \left\lbrace V\leq \C^n:\dim(V) = k \right\rbrace.
\end{equation*} 

The Grassmannian will be considered together with the Plücker embedding $\Gr\left(k,n\right)\hookrightarrow \P^{\binom{n}{k}-1}$. Let $L=(l_1,\dots,l_k)\in I_{k,n}$ and let $p_L$ be the Plücker variable defined for $V\in \Gr\left(k,n\right)$ as follows: given a basis $\lbrace v_i\rbrace_{i=1}^k$, let $M_V$ be the matrix with rows the vectors $v_i$ expressed in the standard basis of $\C^n$. Then 
\begin{equation*} 
p_L\left(M_V\right):=\det\left((M_V)_{l_1},\dots,(M_V)_{l_k}\right).
\end{equation*}
Let $I=(i_1<\dots i_{s-1}<i_{s+1}<\dots <i_k)\in I_{k-1,n}$ and $J\in (j_1<\dots <j_{s-1}<j<j_{s+1}<\dots j_{k+1})$. For $j\in I$, define $p_{I\cup j}:=0$. Otherwise, if $j\not\in I$, define
\begin{align*}
    I\cup j &:= \left(i_1<\dots < i_{s-1}<j<i_{s+1}<\dots< i_k\right).\\
    J\setminus j &:= \left(j_1<\dots<j_{s-1}<j_{s+1}<\dots <j_{k_1}\right). 
\end{align*} 

For $I\in I_{k-1,n}$ and $J\in I_{k+1,n}$, define the \emph{Plücker relation} $R_{I,J}$ as follows
\begin{equation}\tag{PR}\label{PR}
    R_{I,J} :=\sum_{j\in J}\left(-1\right)^{\#\lbrace i\in I:i<j\rbrace + \#\lbrace j'\in J:j<j' \rbrace}p_{I\cup j}p_{J\setminus j}.
\end{equation} The \emph{Plücker ideal} is the ideal generated by the Plücker relations
\begin{equation*}
    \I_{k,n} :=\left\langle R_{I,J}:I\in I_{k-1,n},J\in I_{k+1,n}\right\rangle\subset \C\left[p_K:K\in I_{k,n}\right],
\end{equation*} and the Grassmannian $\Gr\left(k,n\right)$ is the vanishing locus of this ideal. The homogeneous coordinate ring is $\C\left[\Gr(k,n)\right] = \C\left[p_K:K\in I_{k,n}\right]/\I_{k,n}$, and is denoted $A_{k,n}$. The images of the Plücker variables $p_I$ under the canonical projection $\C\left[p_K:K\in I_{k,n}\right]\rightarrow A_{k,n}$ are denoted by $\overline{p_I}$ and are called \emph{Plücker coordinates}.

\subsection*{Monomial orders, initial ideals, and tropicalization}
Given a polynomial ring $R=\C\left[x_1,\dots,x_n\right]$, $\Mon\left(R\right)$ denotes the set of \emph{monomials} of $R$, that is
\begin{equation*}
    \Mon\left(R\right) := \left\lbrace x_1^{a_1}\dots x_n^{a_n}: (\forall i=1,\dots,n)(a_i\in \N)\right\rbrace.
\end{equation*} For an element in $\Mon\left(R\right)$, write $\x^{\a}:= x_1^{a_1}\dots x_n^{a_n}$ with $\a\in \N^n$.

A total order $\leq$ on $\Mon\left(R\right)$ is called a \emph{monomial order} or \emph{term order} if it satisfies
\begin{itemize}
    \item $\forall \a\in \N^n$: $1\leq \x^a$;
    \item If $\x^{\a}\leq \x^{\b}$, then: $\forall \U\in \N^{n}$ it holds $\x^{\a+\U}\leq \x^{\b+\U}$.
\end{itemize}

Given a total order $\leq$ on $\Mon\left(R\right)$ and vector $\w\in \R^n$, called a \emph{weight vector}, a new order is defined as: $\x^{\a}\leq_{\mathbf{w}} \x^{\b}$ if and only if 
\begin{itemize}
    \item $\w\cdot\a < \w\cdot \b$, or
    \item $\w\cdot \a = \w\cdot \b$ and $\x^{\a}\leq \x^{\b}$.
\end{itemize}

Let $f=\sum_{\a\in \N^n}c_{\a}\x^{\a}\in R$ be an arbitrary element where only finitely many $c_{\a}\neq 0$, and $I\subset R$ an ideal. For the monomial order $\leq$, define the \emph{initial term} of $f$ and the \emph{initial ideal} of $I$ as
\begin{equation*}
    \In_{\leq} \left(f\right) := \min \left\lbrace c_{\a}\x^{\a}:c_{\a}\neq 0 \right\rbrace , \qquad \In_{\leq }\left(I\right) :=\left\langle \In_{\leq}(g):g\in I \right\rangle.
\end{equation*}

Let $w\in \R^n$ be a weight vector and $M\in \Z^{m\times n}$. Let $<$ and $\prec$ be the usual order on $\R$ and a monomial order on $\Z^m$, respectively. Define the \emph{initial form} of $f$ with respect to $w$ and $M$, respectively, as
\begin{equation*}
    \In_{w}\left(f\right) := \sum_{\a: \,w\cdot \a  = \min_{<}\lbrace w\cdot \b:c_{\b}\neq 0\rbrace}c_{\a}\x^{\a},\qquad \In_M \left(f\right) := \sum_{\a:\,M\a^t = \min_{\prec} \lbrace M\b^t: c_{\b}\neq 0\rbrace}c_{\a}\x^{\a},
\end{equation*} and in a similar fashion, define the \emph{initial ideals} as 
\begin{equation*}
    \In_w\left(I\right) := \left\langle \In_w(g):g\in I\right\rangle ,\qquad \In_M\left(I\right) :=\left\langle \In_M(g):g\in I\right\rangle.
\end{equation*}

By the definition of initial ideal with respect to a weight vector, there is a fan structure on $\R^n$, called the \emph{Gröbner fan} and denoted $GF\left(I\right)$. By \cite[Lemma 1.1]{sturmfels1996equationsdefiningtoricvarieties}, toric ideals are prime and \emph{binomial}, i.e., generated by binomials. It is, however, not always true that initial ideals with respect to arbitrary weight vectors $w\in \R^n$ are binomial. Thus one must consider a special subfan of $GF(I)$.

Consider now $\hat{R} = \C\left[x_1^{\pm 1},\dots , x_n^{\pm 1}\right]$, the ring of Laurent polynomials and $\hat{f}=\sum_{\a\in \Z^n}c_{\a}\x^{\a}\in \hat{R}$, with only finitely many $c_{\a}\neq 0$. The \emph{tropicalization} of $f$ (cf. \cite[Definitions 3.1.1/3.1.2]{maclagan2021introduction}) is defined  as the function $\hat{f}^{\trop}:\R^n\rightarrow \R$ given by 
\begin{equation*}
    \hat{f}^{\trop}\left(w\right) := \min \left\lbrace w\cdot \a:c_{\a}\neq 0\right\rbrace.
\end{equation*} Given the hypersurface $V(\hat{f})\subset \left(\C^*\right)^n$, the \emph{tropical hypersurface} $\trop\left(V(\hat{f})\right)$ is defined as
\begin{equation*}
    \trop\left(V(\hat{f})\right) :=\left\lbrace w\in \R^n:\quad \begin{matrix}
        \textrm{the minimum in }\hat{f}^{\trop}(w)\\
        \textrm{is achieved at least twice}
    \end{matrix}\right\rbrace,
\end{equation*} and for an ideal $I\subset \hat{R}$, the \emph{tropical variety} is defined as
\begin{equation*}
    \trop\left(V(I)\right) := \bigcap_{f\in I}\trop\left(V(f)\right).
\end{equation*}

If $I\subset R$, consider the ideal $\hat{I}:=I\hat{R}$. Then, the tropicalization of the variety $V\left(I\right)$ is defined as
\begin{equation*}
    \trop\left(V(I)\right) :=\trop\left(V(\hat{I})\right).
\end{equation*}

The following characterization of the tropicalization, regarded as the \emph{Fundamental Theorem of Tropical Geometry}, is stated as follows \cite[Theorem 3.2.3]{maclagan2021introduction}

\begin{thm*}
    Let $I\subset R$ be an ideal. Then
    \begin{equation*}
        \trop\left(V(I)\right) = \left\lbrace w\in \R^n: \In_w(I) \textrm{ is monomial free} \right\rbrace.
    \end{equation*}
\end{thm*}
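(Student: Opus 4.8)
The plan is to unwind the definition $\trop(V(I))=\bigcap_{f\in\hat{I}}\trop(V(f))$, with $\hat{I}=I\hat{R}$, and prove the two inclusions directly from elementary properties of $\In_w$: it is compatible with multiplication by monomials, $\In_w(\hat{I})$ is by definition the ideal generated by the forms $\In_w(g)$ with $g\in\hat{I}$, and each such form is $w$-homogeneous. Recall that, for $f\in\hat{R}$, the minimum in $f^{\trop}(w)$ is attained at least twice exactly when $\In_w(f)$ is not a single term, and that a monomial is a unit of $\hat{R}$, so $\In_w(\hat{I})$ contains a monomial if and only if $\In_w(\hat{I})=\hat{R}$. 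Two preliminary remarks keep this honest: the latter property is unchanged whether $\In_w$ is computed in $R$ or in $\hat{R}$, since $\In_w(\hat{I})=\In_w(I)\hat{R}$, so it is equivalent to the monomial-freeness of $\In_w(I)$ appearing in the statement; and in the degenerate case $\hat{I}=\hat{R}$ both sides of the asserted equality are empty.

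One inclusion is formal: if $\In_w(\hat{I})$ contains no monomial, then for every $f\in\hat{I}$ the form $\In_w(f)\in\In_w(\hat{I})$ is not a single term, so $w\in\trop(V(f))$; intersecting over all $f$ yields $w\in\trop(V(I))$. For the reverse inclusion I argue the contrapositive. Assume $\In_w(\hat{I})$ contains a monomial, which after rescaling we take to be $\x^{\a}$. By definition of the initial ideal, $\x^{\a}=\sum_{i=1}^{k}h_i\In_w(g_i)$ with $h_i\in\hat{R}$ and $g_i\in\hat{I}$. Since $\x^{\a}$ and each $\In_w(g_i)$ are $w$-homogeneous, we may replace each $h_i$ by one of its $w$-homogeneous components, chosen so that every product $h_i\In_w(g_i)$ becomes $w$-homogeneous of $w$-degree $w\cdot\a$; the identity $\x^{\a}=\sum_i h_i\In_w(g_i)$ still holds. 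Now set $f:=\sum_i h_ig_i\in\hat{I}$ and write $g_i=\In_w(g_i)+r_i$, where $r_i$ involves only monomials of $w$-degree strictly above that of $\In_w(g_i)$. Then every term of $h_ig_i$ has $w$-degree at least $w\cdot\a$, with equality precisely on the terms of $h_i\In_w(g_i)$; hence the smallest $w$-degree occurring in $f$ is $w\cdot\a$, and the part of $f$ in that $w$-degree equals $\sum_i h_i\In_w(g_i)=\x^{\a}\ne 0$. Therefore $\In_w(f)=\x^{\a}$ is a single term, so $w\notin\trop(V(f))\supseteq\trop(V(I))$, as desired.

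The argument is essentially bookkeeping, and the one step that needs attention is the last: one must check that cancellation among the summands of $f=\sum_i h_ig_i$ does not push the smallest $w$-degree of $f$ above $w\cdot\a$ — and this is exactly what the uniform lower bound on the $w$-degrees of all terms, together with the fact that the $w\cdot\a$-graded part equals $\x^{\a}$, guarantees. (The genuinely deep half of the Fundamental Theorem of Tropical Geometry — the identification of $\trop(V(I))$ with the closure of the set of coordinatewise valuations of points of $V(I)$ over a valued extension of $\C$ — is neither invoked nor asserted in the form needed here; that is where Gröbner-basis finiteness and algebraically closed valued fields genuinely enter.)
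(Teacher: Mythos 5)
Your proof is correct, and the one step that needs care — lifting a monomial of $\In_w(\hat{I})$ to a single element $f\in\hat{I}$ whose initial form is that monomial, by replacing the coefficients $h_i$ with suitable $w$-homogeneous components so that the $w$-degree-$(w\cdot\a)$ graded piece of $\sum_i h_ig_i$ is exactly $\x^{\a}$ — is handled properly; cancellation can only occur in strictly higher $w$-degree, so $\In_w(f)$ is indeed a single term. Note, however, that the paper does not prove this statement at all: it quotes it as the Fundamental Theorem of Tropical Geometry, citing Maclagan--Sturmfels (Theorem 3.2.3), so there is no in-paper argument to compare against. What you have written is essentially the standard textbook proof of the ``combinatorial'' equivalence between the hypersurface-intersection definition of $\trop(V(I))$ used in the paper and the monomial-free characterization of initial ideals, carried out in the trivially valued setting the paper implicitly works in; and you are right that the deep half of the Fundamental Theorem (realizing weight vectors as coordinatewise valuations of actual points over a valued extension) is not asserted by the quoted statement and is not needed. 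Two small points you gloss over are fine but deserve a line each: $\In_w(I\hat{R})=\In_w(I)\hat{R}$ follows by clearing denominators with a monomial (if $g\in I\hat{R}$ then $\x^{\mathbf{c}}g\in I$ for some $\mathbf{c}$, and $\In_w(\x^{\mathbf{c}}g)=\x^{\mathbf{c}}\In_w(g)$), which also gives directly the equivalence of monomial-freeness in $R$ and in $\hat{R}$; and the convention that $\In_w(f)$ fails to be a single term exactly when the minimum in $\hat{f}^{\trop}(w)$ is achieved at least twice is just the definition, as you say. So your route is a legitimate, self-contained replacement for the citation, at the cost of proving only the equality stated rather than the full theorem cited.
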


By this characterization, there is a fan structure on $\trop\left(V(I)\right)$ defined by noticing that two vectors $w,u\in \trop\left(V(I)\right)$ belong to relative interior of the same cone if and only if $\In_u\left(I\right) = \In_w\left(I\right)$. Furthermore, this describes $\trop\left(V(I)\right)$ as a subfan of the Gröbner fan.

\section{Birational sequences}\label{sec2}
For $k,n\in \Z^+$, it is known that there is an identification of $\Gr\left(k,n\right)$ with $SL_n/P_k$, where $SL_n$ is the special linear complex group and $P_k$ is the parabolic subgroup of upper triangular block matrices of size $k\times k$ and $(n-k)\times (n-k)$ (cf. \cite[Chapter 5]{LakshmibaiBrown2015}). Consider the special linear Lie algebra $\sl_n$ and the exponential map $\exp:\sl_n\rightarrow SL_n$. Given the $\mathbf{A}_{n-1}$ type root system $\Phi$, let $\Phi^+=\lbrace \epsilon_i-\epsilon_j:(i<j)\in I_{2,n}\rbrace$ be the set of positive roots. For any $\beta=\epsilon_i-\epsilon_j\in \Phi^+$, define $f_{\beta}$ the $n\times n$ matrix with 1 in the entry $(i,j)$ and 0 in the others. Consider $x_{\beta}\in \C$ and $\exp\left(x_{\beta}f_{\beta}\right)\in SL_n$. Let
\begin{equation*}
    U_{\beta} :=\left\lbrace \mathbf{1}_{n\times n} + x_{\beta}f_{\beta}:x_{\beta}\in \C\right\rbrace\subset U^+,
\end{equation*} with $U^+\subset SL_n$ the subgroup of upper triangular matrices with 1s along the diagonal.

\begin{definition}[\cite{FANG2017107,Bossinger:2021}]\label{birseqs}
    Given $\beta_1,\dots,\beta_N\in \Phi^+$, the sequence $S=\left(\beta_1,\dots,\beta_N\right)$ is \emph{birational} for $\Gr\left(k,n\right)$ if the multiplication map
    \begin{equation*}
        \psi_S: U_{\beta_N}\times \dots \times U_{\beta_1}\rightarrow U^+
    \end{equation*} has image birational to $\Gr\left(k,n\right)$.
\end{definition}

Consider the following example.
\begin{ex}[PBW sequence]
    Let $\Phi_k^+ := \left\lbrace \epsilon_i-\epsilon_j:1\leq i\leq k<j\leq n\right\rbrace$ and let $S=\left(\beta_1,\dots,\beta_N\right)$ be any ordering of the elements of $\Phi_k^+$. This is called a \emph{PBW sequence}. The image of the multiplication map has elements of the form
    \begin{equation*}
        A = \begin{pmatrix}
            \mathbf{1}_{k\times k} & * \\
            \mathbf{0} & \mathbf{1}_{(n-k)\times (n-k)}
        \end{pmatrix},
    \end{equation*}  and the span of the first $k$ rows gives the required birational map. It is a straightforward computation to show that the ordering of the roots in this example does not modify the birational map.
\end{ex}

It will be seen that certain PBW sequences can be considered as belonging to the special subclass of \emph{iterated sequences}. In order to define this class, the following Lemma (cf. \cite[Lemma 1]{Bossinger:2021}) is needed.

\begin{lem}\label{iterated}
    Let $\beta_1,\dots,\beta_N\in \Phi^+$ (in $\sl_n$) and let $S' = \left(\beta_1,\dots,\beta_N\right)$ be a birational sequence for $\Gr\left(k,n\right)$. Let $i_1,\dots,i_k\in [n]$ be pairwise different indices. Then 
    \begin{equation*}
        S = \left(\epsilon_{i_1}-\epsilon_{n+1},\dots,\epsilon_{i_k}-\epsilon_{n+1},\beta_1,\dots,\beta_N\right)
    \end{equation*} is a birational sequence for $\Gr\left(k,n+1\right)$.
\end{lem}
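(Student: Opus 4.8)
The plan is to write the multiplication map $\psi_S$ explicitly as a product of matrices and to recognise in it the map $\psi_{S'}$ sitting in the top‑left $n\times n$ block, perturbed only by one extra column whose $k$ entries are controlled by the $k$ new factors; since $\dim\Gr(k,n+1)=\dim\Gr(k,n)+k$, these $k$ extra parameters should supply exactly the missing directions. First I would record the behaviour of the new factors: as $i_1,\dots,i_k\in[n]$ are distinct, any product of two of the elementary matrices $f_{\epsilon_{i_\ell}-\epsilon_{n+1}}$ (each with a single $1$, in position $(i_\ell,n+1)$) vanishes, so the one‑parameter subgroups $U_{\epsilon_{i_\ell}-\epsilon_{n+1}}$ commute pairwise and
\begin{equation*}
\prod_{\ell=1}^{k}\Bigl(\mathbf 1_{(n+1)\times(n+1)}+x_\ell\, f_{\epsilon_{i_\ell}-\epsilon_{n+1}}\Bigr)=\begin{pmatrix}\mathbf 1_{n\times n}&c\\0&1\end{pmatrix},\qquad c=\sum_{\ell=1}^{k}x_\ell\,e_{i_\ell}\in\C^n ,
\end{equation*}
where distinctness of the $i_\ell$ ensures $c$ genuinely depends on all $k$ parameters. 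Embedding $U^+\subset SL_n$ into $U^+\subset SL_{n+1}$ by appending a $1$ in the lower‑right corner leaves the subgroups $U_{\beta_i}$ unchanged, so the value of $\psi_{S'}$ at a point, viewed in $SL_{n+1}$, is the block matrix with upper‑left block $B=\psi_{S'}(u_{\beta_N},\dots,u_{\beta_1})\in U^+$, lower‑right entry $1$, and vanishing off‑diagonal blocks; multiplying the factors of $S$ in the order prescribed by the definition then gives
\begin{equation*}
\psi_S=\begin{pmatrix}B&0\\0&1\end{pmatrix}\begin{pmatrix}\mathbf 1_{n\times n}&c\\0&1\end{pmatrix}=\begin{pmatrix}B&Bc\\0&1\end{pmatrix}.
\end{equation*}

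Next I would read off birationality. Under $\Gr(k,n+1)=SL_{n+1}/P_k$ the point attached to this matrix is the span of its first $k$ rows, i.e.\ the span of the rows of the $k\times(n+1)$ matrix obtained from the first $k$ rows of $B$ by adjoining the column $B_{[k],\{i_1,\dots,i_k\}}\cdot(x_1,\dots,x_k)^t$, where $B_{[k],\{i_1,\dots,i_k\}}$ is the $k\times k$ submatrix of $B$ on rows $1,\dots,k$ and columns $i_1,\dots,i_k$. Deleting the last coordinate sends this $k$‑plane to the span of the first $k$ rows of $B$, i.e.\ to the point of $\Gr(k,n)$ produced by $\psi_{S'}$; by hypothesis this is a birational parametrisation, so as the $u_{\beta_i}$ vary the resulting point sweeps a dense open $\mathcal O\subseteq\Gr(k,n)$, and it does so generically injectively. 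For fixed generic $B$, the $k$‑planes of $\C^{n+1}$ that project onto $\operatorname{span}(\text{first }k\text{ rows of }B)$ and avoid $\C e_{n+1}$ are precisely the graphs of linear functionals on that plane, a copy of $\C^k$; in the chosen basis such a functional has coordinate vector $B_{[k],\{i_1,\dots,i_k\}}\cdot(x_1,\dots,x_k)^t$, and the determinant of $B_{[k],\{i_1,\dots,i_k\}}$ equals, up to sign, the Plücker coordinate $p_{\{i_1,\dots,i_k\}}$ of $\operatorname{span}(\text{first }k\text{ rows of }B)$. Since $p_{\{i_1,\dots,i_k\}}$ is a nonzero element of the homogeneous coordinate ring of the irreducible variety $\Gr(k,n)$, it is nonvanishing on a dense open; hence for generic $B$ the submatrix $B_{[k],\{i_1,\dots,i_k\}}$ is invertible and $x\mapsto B_{[k],\{i_1,\dots,i_k\}}\cdot(x_1,\dots,x_k)^t$ is an isomorphism $\C^k\to\C^k$.

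Finally I would assemble the two facts. The image of $\psi_S$ in $\Gr(k,n+1)$ contains every $k$‑plane $V$ with $e_{n+1}\notin V$ whose projection to $\C^n$ lies in $\mathcal O\cap\{p_{\{i_1,\dots,i_k\}}\neq 0\}$; since $\{V:e_{n+1}\notin V\}$ is open dense in $\Gr(k,n+1)$ and the projection to $\Gr(k,n)$ is surjective with $k$‑dimensional fibres, this preimage of a dense open set is dense, so $\psi_S$ is dominant onto $\Gr(k,n+1)$. For generic injectivity, a generic $V$ determines its projection, which by birationality of $\psi_{S'}$ determines the $u_{\beta_i}$ and hence $B$; then $V$, being the graph of a functional, determines $B_{[k],\{i_1,\dots,i_k\}}\cdot(x_1,\dots,x_k)^t$, and invertibility of $B_{[k],\{i_1,\dots,i_k\}}$ determines the $x_\ell$ uniquely. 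A dominant, generically injective morphism of irreducible varieties is birational, so $S$ is a birational sequence for $\Gr(k,n+1)$. I expect the main obstacle to be bookkeeping rather than depth: one must pin down the convention associating a point of $SL_{n+1}/P_k$ to an element of $U^+$ (the ``span of the first $k$ rows'' used in the PBW example) and the order in which $\operatorname{mult}$ composes the factors of $S$, so that the $\psi_{S'}$‑part genuinely occupies the top‑left block and the new parameters enter the first $k$ rows through $Bc$ rather than through $c$ — were they to enter as $c$, a parameter $x_\ell$ with $i_\ell>k$ would be invisible to the first $k$ rows and birationality would fail. Once this is arranged, the only non‑formal ingredient is the non‑vanishing of the Plücker coordinate $p_{\{i_1,\dots,i_k\}}$ on the dense image of $\psi_{S'}$, which is immediate.
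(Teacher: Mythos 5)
Your proposal is correct, but note that the paper itself does not prove this lemma: it imports it verbatim from \cite[Lemma 1]{Bossinger:2021}, remarking only that the cited proof constructs the birational map explicitly together with its pullback. So the comparison is with that cited construction as the paper later uses it. Your starting point coincides with it: you pin down the multiplication order correctly (the new factors act on the right), so the relevant matrix is $\left(\begin{smallmatrix}B & Bc\\ 0 & 1\end{smallmatrix}\right)$ with $c=\sum_\ell x_\ell e_{i_\ell}$, and expanding the maximal minors of $(B\,|\,Bc)$ along the last column recovers exactly the support formula $\supp\bigl(\phi_S^*(\overline{p_I})\bigr)=\bigl\lbrace x_{l,n+1}\x^{\b}\bigr\rbrace$ that the paper quotes from the cited proof in Lemma \ref{homogeneidad} — a point worth making explicit if you want your argument to substitute for the citation, since the rest of the paper uses that pullback description rather than bare birationality. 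Where you genuinely diverge is in how birationality is established: instead of exhibiting the map and its inverse via the pullback, you argue geometrically through the affine fibration $\lbrace V: e_{n+1}\notin V\rbrace\rightarrow \Gr(k,n)$, identify the fibre over $W$ with graphs of functionals, use that the new parameters enter the first $k$ rows through $B_{[k],\lbrace i_1,\dots,i_k\rbrace}\,x$, and invoke nonvanishing of the Plücker coordinate $p_{\lbrace i_1,\dots,i_k\rbrace}$ (here the distinctness of $i_1,\dots,i_k$ is used) plus ``dominant and generically injective implies birational'' in characteristic $0$. This is sound; the only loose ends are cosmetic: the image $\mathcal{O}$ of $\psi_{S'}$ is merely constructible, so one should pass to a dense open subset of it before taking preimages, and the hypothesis ``image birational to $\Gr(k,n)$'' must be read, as you do and as the PBW example and the later use of $\phi_S$ dictate, as birationality of the span-of-first-$k$-rows map.
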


This lemma helps construct new birational sequences which are not necessarily PBW. Furthermore, the proof gives a detailed construction of the birational map, describing both the map and its pullback. It is now possible to define the special class of \emph{iterated sequences}.

\begin{definition}\label{itseqs}
    Let $S'$ be a birational sequence for $\Gr\left(k,k+1\right)$. Obtain a birational sequence $S$ for $\Gr\left(k,n\right)$ by applying \cite[Lemma 1]{Bossinger:2021} $n-k-1$ times to $S'$. Then $S$ is called an \emph{iterated sequence}.
\end{definition}

\subsection*{Lowest-term valuations}
Now that the birational sequences have been defined and the examples of PBW and iterated sequences have been presented, it is possible to define a valuation on $A_{k,n}\setminus \lbrace 0\rbrace$.

Fix a birational sequence $S= \left(\beta_1,\dots,\beta_N\right)$ for $\Gr\left(k,n\right)$ and denote $\phi_S:\im\left(\psi_S\right)\dashrightarrow \Gr\left(k,n\right)$ the birational map. Consider the polynomial ring $\C\left[x_{\beta}:\beta\in S\right]$. Fix an identification
\begin{equation}\tag{*}
    x_{\beta_1}^{a_1}\dots x_{\beta_N}^{a_N} \quad \longleftrightarrow \quad \left(a_1,\dots,a_N\right). 
\end{equation} and let $<_{lex}$ be the lexicographic order on $\Z^N$.

Recall that a valuation is a function $\V_S:\C\left[x_{\beta}:\beta\in S\right]\setminus \lbrace 0\rbrace \rightarrow \left(\Z^N,\prec_S\right)$, with $\prec_S$ a monomial order on $\Z^N$, that satisfies: for all $f,g\in \C\left[x_{\beta}:\beta\in S\right]\setminus \lbrace 0\rbrace$ and $r\in \C^*$ it holds
\begin{equation*}
    \min_{\prec_S}\lbrace \V_S(f),\V_S(g)\rbrace\preceq_S \V_S\left(f+g\right),\qquad \V_S\left(fg\right) = \V_S\left(f\right) + \V_S\left(g\right), \qquad \V_S\left(rf\right) = \V_S\left(f\right).
\end{equation*}

In order to define the function $\V_S$, start by defining the \emph{height function} $\Ht:R^+\rightarrow \Z^+$ and the \emph{height-weighted function} $\Psi_S:\Z^N\rightarrow \Z^+$ as
\begin{equation*}
    \Ht\left(\epsilon_i-\epsilon_j\right) := j-i,\qquad \Psi_S\left(m_1,\dots,m_N\right) := \sum_{l=1}^N m_l\Ht(\beta_l).
\end{equation*}

The $\Psi_S$\emph{-weighted reverse lexicographic order} $\preceq_{S}$ is the monomial order defined as follows: $\x^{\a}\preceq_{\Psi_S} \x^{\b}$ if and only if $\Psi_S\left(\a\right)< \Psi_S\left(\b\right)$ or ($\Psi_S\left(\a\right) = \Psi_S\left(\b\right)$ and  $\a \geq_{lex} \b$).

Let $f=\sum_{\a\in \N^N}c_{\a}\x^{\a}$, with only finitely many $c_{\a}\neq 0$. Consider the function  defined by
\begin{equation*}
    \V_S:\C\left[x_{\beta}:\beta\in S\right]\setminus\lbrace 0\rbrace \rightarrow \left(\Z^N,\prec_S\right),\qquad \V_S\left(f\right) := \min_{\prec_S}\left\lbrace \a\in \N^N:c_{\a}\neq 0 \right\rbrace.
\end{equation*}
This function is, in fact, a valuation on $\C\left[x_{\beta}:\beta\in S\right]\setminus \lbrace 0\rbrace$. It is extended to $\C\left(x_{\beta}:\beta\in S\right)\setminus \lbrace 0\rbrace$ as $\V_S\left(\frac{f}{g}\right):=\V_S\left(f\right)-\V_S\left(g\right)$. Using the birational map $\phi_S$, define it on $\C\left(\Gr(k,n)\right)\setminus \lbrace 0\rbrace$ by $\V_S\left(h\right) :=\V_S\left(\phi_S^*(h)\right)$, and restrict to $A_{k,n}\setminus \lbrace 0\rbrace$. This is called the \emph{lowest term valuation} associated to the biration sequence $S$.

\subsection*{Weight homogeneity}
Given the definition of the lowest term valuation associated to a birational sequence $S$ for $\Gr\left(k,n\right)$, the next step is computing the valuations of Plücker coordinates in order to get the weighting matrix $M_{\V_S}$. This might, at first, seem not so straightforward since first the values of $\Psi_S$ must be computed and compared, and in the case of a tie, procede with the lexicographic order. 

It turns out that if $k=3$ and $S$ is iterated, this computation requires only the comparison using the lexicographic order. In order to prove this, the following lemma is necessary.

\begin{lem}[Weight Homogeinity]\label{homogeneidad}
    Let $S$ be an iterated sequence for $\Gr\left(3,n\right)$, $I=(i_1,i_2,i_3)\in I_{3,n}$ and $\overline{p_I}\in A_{3,n}$. Let $\a\in \N^{3(n-3)}$ be such that $\overline{\xa}\in \supp\left(\phi_S^*(\overline{p_I})\right)$. Then
    \begin{equation}
        \Psi_S\left(\a\right)=i_1+i_2+i_3-6.
    \end{equation}
\end{lem}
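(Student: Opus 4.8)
The plan is to prove the lemma by induction on $n$, following the inductive structure that defines iterated sequences, and tracking how the pullback of a Plücker coordinate changes under one application of Lemma~\ref{iterated}. For the base case $n = 4$ one has $\Gr(3,4)$, where the birational sequence $S'$ comes from $\Gr(3,3+1)$ directly; here there are only $\binom{4}{3}=4$ Plücker coordinates, the pullbacks are explicit monomials in the three variables $x_{\beta_1}, x_{\beta_2}, x_{\beta_3}$ coming from the $3\times 3$ minors of the matrix in the image of $\psi_{S'}$, and one checks directly that $\Psi_{S'}(\a) = i_1 + i_2 + i_3 - 6$ on each support monomial. Concretely, for $p_{123}$ the relevant minor is $1$, so $\a = 0$ and the formula reads $1+2+3-6 = 0$; the other three require a short computation using $\Ht(\epsilon_i - \epsilon_j) = j-i$.

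For the inductive step, suppose the statement holds for an iterated sequence $S'$ for $\Gr(3,n)$, and let $S = (\epsilon_{i_1}-\epsilon_{n+1}, \epsilon_{i_2}-\epsilon_{n+1}, \epsilon_{i_3}-\epsilon_{n+1}, \beta_1, \dots, \beta_N)$ be the iterated sequence for $\Gr(3,n+1)$ obtained by applying Lemma~\ref{iterated}. The key input is the explicit description of the birational map $\phi_S$ and its pullback $\phi_S^*$ furnished by the proof of that lemma (the excerpt emphasizes that the proof "gives a detailed construction of the birational map, describing both the map and its pullback"). I would use that description to express $\phi_S^*(\overline{p_K})$ for $K \in I_{3,n+1}$ in terms of the $\phi_{S'}^*(\overline{p_L})$ for $L \in I_{3,n}$ together with the three new variables $x_{\epsilon_{i_1}-\epsilon_{n+1}}$, $x_{\epsilon_{i_2}-\epsilon_{n+1}}$, $x_{\epsilon_{i_3}-\epsilon_{n+1}}$. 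Two cases arise: when $n+1 \notin K$, the pullback should essentially agree with $\phi_{S'}^*(\overline{p_K})$ (no new variable appears, and the index sum is unchanged, so the inductive hypothesis applies verbatim); when $n+1 \in K$, say $K = \{k_1, k_2, n+1\}$, the pullback is a combination of terms of the form $x_{\epsilon_{i_\ell}-\epsilon_{n+1}} \cdot \phi_{S'}^*(\overline{p_{L}})$ where $L$ is obtained from $K$ by replacing $n+1$ with $i_\ell$. Then on each support monomial, $\Psi_S$ picks up an extra contribution $\Ht(\epsilon_{i_\ell}-\epsilon_{n+1}) = (n+1) - i_\ell$ beyond $\Psi_{S'}$ of the rest, and by induction the latter equals $k_1 + k_2 + i_\ell - 6$; adding gives $k_1 + k_2 + (n+1) - 6 = k_1 + k_2 + (n+1) - 6$, which is exactly $\sum K - 6$. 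A subtlety to handle carefully is the case where $i_\ell = k_1$ or $i_\ell = k_2$, in which that term vanishes (the Plücker coordinate with a repeated index is zero), so only the genuinely contributing $\ell$ matter, and for those the index-replacement bookkeeping is as claimed.

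The main obstacle, I expect, is making the passage $\phi_S^* \mapsto \phi_{S'}^*$ on Plücker coordinates precise and uniform — that is, verifying that every monomial in $\supp(\phi_S^*(\overline{p_K}))$ really does factor as (a new variable to the first power or to the zeroth power) times a monomial in $\supp(\phi_{S'}^*(\overline{p_L}))$ for the appropriate $L$, with no cross terms or higher powers of the new variables surviving. This requires unwinding the construction in the proof of Lemma~\ref{iterated} carefully, in particular understanding how a $3 \times 3$ minor of the enlarged $U^+$-matrix (whose last column is governed by the three new one-parameter subgroups) expands along that last column. Once that structural claim is in hand, the arithmetic identity $\Psi_S(\a) = \sum K - 6$ is a one-line consequence of the induction hypothesis and the definition of $\Ht$. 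A secondary point worth checking is that the formula is genuinely independent of which monomial in the support one picks — i.e. that $\phi_S^*(\overline{p_K})$ is "$\Psi_S$-homogeneous" — which is precisely the content being proved and which propagates cleanly through the induction as long as the base case is homogeneous and each inductive modification shifts the weight of every term by the same amount.
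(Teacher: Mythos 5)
Your proposal is correct and follows essentially the same route as the paper: induction on $n$ along the iterated structure, a direct check for $\Gr(3,4)$, and in the inductive step the decomposition of $\supp\left(\phi_S^*(\overline{p_I})\right)$ into terms $x_{l,n+1}\x^{\b}$ with $\x^{\b}\in \supp\left(\phi_{S'}^*(\overline{p_{I\setminus (n+1)\cup l}})\right)$, followed by the same height arithmetic. The structural factorization you flag as the main obstacle is exactly what the paper imports from the proof of \cite[Lemma 1]{Bossinger:2021}, so no new argument is needed there.
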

\begin{proof}
    Without loss of generality, let $(\beta_1,\beta_2,\beta_3)$ be a PBW sequence and $S$ the iterated sequence
    \begin{equation*}
        S=\left(\epsilon_{l_1}-\epsilon_n,\epsilon_{l_2}-\epsilon_n, \epsilon_{l_3}-\epsilon_n,\dots, \beta_1,\beta_2,\beta_3\right).
    \end{equation*} 

    Procede by induction: let $I=(i_1,i_2,i_3)\in I_{3,4}\setminus I_{3,3}$ and $j\in [3]$ be such that $I=\left((1,2,3)\setminus j\right)\cup 4$. Then $\supp\left( \phi_S^*\overline{p_I})\right)=\left\lbrace x_{j,4}\right\rbrace=\left\lbrace \x^{\a_{j,4}}\right\rbrace$ and
    \begin{equation*}
        \Psi_S\left(\a_{j,4}\right) = 4 - j = 1 + 2 + 3 + 4 -j -6 = i_1 + i_2 + i_3 - 6.
    \end{equation*}

    Assume that for all $J=(j_1,j_2,j_3)\in I_{k,n-1}$ and all $\a\in \Z^{3(n-3)}$ with $\xa\in \supp(\phi_S^*(\overline{p_I}))$, the following equality holds
    \begin{equation*}
        \Psi_S(\a)= j_1+j_2+j_3 - 6.
    \end{equation*}

    Let $I=(i_1,i_2,i_3)\in I_{3,n}\setminus I_{3,n-1}$. By the proof of \cite[Lemma 1]{Bossinger:2021}, the following equality holds
    \begin{equation*}
        \supp\left(\phi_S^*(\overline{p_I})\right)=\left\lbrace x_{l,n}\x^{\b}: \x^{\b}\in \supp(\phi_S^*(\overline{p_{I\setminus n\cup l}})),l\in \lbrace l_1,l_2,l_3\rbrace \wedge (l\not \in I\setminus n)\right\rbrace.
    \end{equation*}

    Let $l'\in (l_1,l_2,l_3)$ be such that $x_{l',n}\x^{\b}\in \supp(\phi_S^*(\overline{p_I}))$, with $\x^{\b}\in \supp\left(\phi_S^*(p_{I\setminus n\cup l'})\right)$. By induction, it holds
    \begin{equation*}
        \Psi(\b)= i_1 + i_2+i_3-n+l' -6. 
    \end{equation*} Let $\a\in \Z^{3(n-3)}$ be such that $\xa = x_{l',n}$. Notice that $\x^{\a+\b}=x_{l',n}\xb$. Then
    \begin{equation*}
        \Psi(\a+\b)=n-l' + i_1+i_2+i_3-n+l'-6 = i_1 + i_2+i_3-6.
    \end{equation*}
\end{proof}

The direct consequence of this lemma is Corollary \ref{only-depends-on-lex}, which significantly reduces the complexity of computations via the Algorithm \ref{algorithm}.

\begin{cor}\label{only-depends-on-lex}
    Let $S$ be an iterated sequence for $\Gr\left(3,n\right)$, $I=(i_1,i_2,i_3)\in I_{3,n}$ and $\overline{p_I}\in A_{3,n}$. Then
    \begin{equation*}
        \V_S\left(\overline{p_I}\right) = \max_{<_{lex}} \left\lbrace \mathbf{m}\in \Z^d:\x^{\mathbf{m}}\in \supp (\phi_S^*(\overline{p_I}))\right\rbrace.
    \end{equation*}
\end{cor}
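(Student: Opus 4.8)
The plan is to combine the Weight Homogeneity Lemma (Lemma~\ref{homogeneidad}) with the definition of the valuation $\V_S$ and the monomial order $\prec_S$. Recall that $\V_S(\overline{p_I})$ is by definition $\min_{\prec_S}\{\a : \xa \in \supp(\phi_S^*(\overline{p_I}))\}$, where $\prec_S$ is the $\Psi_S$-weighted reverse lexicographic order: $\xa \prec_S \xb$ iff $\Psi_S(\a) < \Psi_S(\b)$, or ($\Psi_S(\a) = \Psi_S(\b)$ and $\a \geq_{lex} \b$). The key observation is that Lemma~\ref{homogeneidad} tells us $\Psi_S$ is \emph{constant} on $\supp(\phi_S^*(\overline{p_I}))$, with value $i_1 + i_2 + i_3 - 6$ depending only on $I$, not on the particular monomial.

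First I would spell out what the minimum with respect to $\prec_S$ becomes once we know $\Psi_S$ is constant on the support. Since every $\a,\b \in \supp(\phi_S^*(\overline{p_I}))$ satisfy $\Psi_S(\a) = \Psi_S(\b) = i_1+i_2+i_3-6$, the first clause in the definition of $\prec_S$ never triggers a strict comparison between two support elements; the order restricted to the support is governed entirely by the second clause. Thus for $\a,\b$ in the support, $\a \prec_S \b$ iff $\a >_{lex} \b$. Consequently $\min_{\prec_S}$ over the support coincides with $\max_{<_{lex}}$ over the support, which is exactly the claimed formula $\V_S(\overline{p_I}) = \max_{<_{lex}}\{\m \in \Z^d : \x^{\m} \in \supp(\phi_S^*(\overline{p_I}))\}$ (with $d = 3(n-3)$).

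Second, I would address a small technical point to make the argument airtight: $\V_S(\overline{p_I})$ is defined as $\V_S(\phi_S^*(\overline{p_I}))$, and $\phi_S^*(\overline{p_I})$ is a priori a rational function, so one should check that its valuation is computed as the $\prec_S$-minimum over the support of a genuine \emph{polynomial} representative. Here the proof of \cite[Lemma~1]{Bossinger:2021} (invoked already in Lemma~\ref{homogeneidad}) is what we rely on: it gives the explicit recursive description of $\supp(\phi_S^*(\overline{p_I}))$ as a set of monomials in the variables $x_{l,n}$, so $\phi_S^*(\overline{p_I})$ is (up to scalar) a polynomial and $\V_S$ picks out its $\prec_S$-minimal exponent vector. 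With that in hand the reduction above applies directly.

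The argument is essentially immediate once Lemma~\ref{homogeneidad} is available, so there is no real obstacle; the only thing to be careful about is the direction of the inequality — the reverse lexicographic convention means the $\prec_S$-smallest monomial is the $\leq_{lex}$-largest, so the statement genuinely involves $\max_{<_{lex}}$ rather than $\min$. I would double-check the sign conventions in the definition of $\preceq_{\Psi_S}$ against the statement to confirm this is correctly stated, and then the corollary follows in a couple of lines.
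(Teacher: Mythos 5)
Your argument is exactly the one the paper intends: the corollary is stated as a direct consequence of Lemma \ref{homogeneidad}, since constancy of $\Psi_S$ on $\supp\left(\phi_S^*(\overline{p_I})\right)$ forces the $\Psi_S$-weighted reverse lexicographic minimum to coincide with the $<_{lex}$-maximum over that support. Your extra remark that $\phi_S^*(\overline{p_I})$ is genuinely a polynomial, via the recursive description of the support from \cite[Lemma 1]{Bossinger:2021}, is a correct and harmless tightening of the same argument.
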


In order to describe Algorithm \ref{algorithm}, used to compute the lowest term valuations, the following notation will be used: if $\lbrace i_1,i_2,i_3 \rbrace\subset [r-1]$ is the set used to iterate from a sequence for $\Gr\left(k,r-1\right)$ to a sequence for $\Gr\left(k,r\right)$, then $\beta_{i_j,r} := \epsilon_{i_j}-\epsilon_r$ for $j=1,2,3$. Furthermore, $f_l\in \Z^{3(n-3)}$ will denote the unitary vector with 1 in the $l$-th entry and 0 in the others.

\begin{algorithm}[H]
\caption{Computation of \emph{lowest term valuation} $\V_S$ of a Plücker coordinate}
\begin{algorithmic}[1]
\Require An iterated sequence $S$ and a multiindex $I\in I_{3,n}$.
\Statex
\State \textbf{Start:} $r=0$, $\mathbf{m}=\mathbf{0}\in \Z^{3(n-3)}$.
\While{$n-r\geq 4$}
    \State Compute $\V_S\left(\overline{p_I}\right)$ by adding unitary vectors and modifying the multiindex $I$.
    \If{$n-r\not\in I$}
        \State $r = r+1$.
    \Else
        \While{$n-r\in I$}
        \State $j=1$ and $\beta_{i_j,n-r}$
        \If{$i_j\not\in I$}
            \State $I= I\setminus (n-r)\cup i_{j}^{n-r}$, $\mathbf{m} = \mathbf{m}+f_{r+j}$, $r=r+1$.
        \Else 
        \State $j = j+1$.
        \EndIf
        \EndWhile
    \EndIf
\EndWhile
\Ensure $\mathbf{m} = \V_S(\overline{p_I})$.
\end{algorithmic}
\label{algorithm}
\end{algorithm}

Besides its usefulness in the computational implementation, Algorithm \ref{algorithm} also implies the following fact: the lowest term valuation $\V_S:\C\left[\Gr(3,n)\right]\setminus \lbrace 0\rbrace \rightarrow \Z^{3(n-3)}$ is a full-rank valuation (cf. Proof of \cite[Theorem 1]{Bossinger:2021}). To prove this it is enough to find a submatrix of the weighting matrix which is triangular with 1s along the diagonal. For example, when considering an iterated PBW sequence $S$ for the Grassmannian $\Gr\left(3,6\right)$ written as
\begin{equation*}
    S = \left(\beta_{1,6},\beta_{2,6},\beta_{3,6},\beta_{1,5},\beta_{2,5},\beta_{3,5},\beta_{1,4},\beta_{2,4},\beta_{3,4}\right).
\end{equation*}
Computing the lowest term valuation of the following Plücker coordinates and arranging them, in the exact order, in the weighting matrix, gives the matrix with 1s along the diagonal:
\begin{equation*}
    \overline{p_{(4,5,6)}} , \overline{p_{(1,5,6)}}, \overline{p_{(1,2,6)}}, \overline{p_{(3,4,5)}}, \overline{p_{(1,4,5)}}, \overline{p_{(1,2,5)}}, \overline{p_{(2,3,4)}}, \overline{p_{(1,3,4)}}, \overline{p_{(1,2,4)}}.
\end{equation*}

\section{Initial ideals}\label{sec3}
Let $S$ be a fixed iterated sequence for $\Gr\left(3,n\right)$, and assume without losing generality that it is iterated from a PBW sequence. By \cite[Theorem 1]{Bossinger:2021} and the Fundamental Theorem of Tropical Geometry, it is known that the initial ideal $\In_{M_S}\left(\I_{3,n}\right)$ is monomial-free. Since the set of Plücker relations is a (subset of a) Gröbner basis of $\I_{3,n}$ with respect to the weigthing matrix $M_S$, a first step in proving that the initial ideal is not only monomial-free but also binomial, is proving that the initial form of any Plücker relation is binomial.

Let $\V_S$ be the lowest term valuation, $d = 3(n-3)$ and $N=\binom{n}{3}$. Let $M_S\in M_{d\times N}\left(\Z\right)$ be the weigthing matrix $M_S := M_{\V_S}$ given by
\begin{equation}\label{weight-matrix}
    M_S := \begin{pmatrix}
        \V_S(\overline{p_K})^t
    \end{pmatrix}_{K\in I_{3,n}}.
\end{equation}

Let $I\in I_{2,6}, J\in I_{4,6}$ and $R_{I,J}\neq 0$ a Plücker relation. The number of terms of $R_{I,J}$ is
\begin{itemize}
    \item 3, if $I$ and $J$ have a common index;
    \item 4, if $I$ and $J$ have no common index.
\end{itemize}

For all $j\in J$ such that $p_{I\cup j}p_{J\setminus j}\neq 0$, let $\mathbf{m}_j\in \Z^{d}$ be the vector that satisfies $\mathbf{p}^{\mathbf{m}_j}=p_{I\cup j}p_{J\setminus j}$. From the definition of $M_S$, it holds
\begin{equation*}
    M_S\mathbf{m}_j^t = \V_S\left(\overline{p_{I\cup j}}\right) + \V_S\left(\overline{p_{J\setminus j}}\right) = \V_S\left(\overline{p_{I\cup j}p_{J\setminus j}}\right). 
\end{equation*}

The following remark is a consequence of Lemma \ref{homogeneidad}.
\begin{rem}\label{min-max}
    Let $S$ be an iterated sequence $\Gr\left(3,n\right)$. Let $I=(i_1,i_2)\in I_{2,n}, J=(j_1,j_2,j_3,j_4)\in I_{4,n}$ and $R_{I,J}\neq 0$. For all $\mathbf{m}\in \Z^{N}$ such that $\mathbf{p}^{\mathbf{m}}\in \supp(R_{I,J})$, the following equality holds
    \begin{equation*}
        \Psi_S\left(\mathbf{m}\right) = i_1+i_2+j_1+j_2+j_3+j_4 - 12.
    \end{equation*} Therefore
    \begin{equation*}
        \min_{\prec_{\Psi_S}}\left\lbrace M_S\mathbf{m}^t:\p^{\mathbf{m}}\in \supp(R_{I,J})\right\rbrace = \max_{<_{lex}}\left\lbrace  M_S\mathbf{m}^t:\p^{\mathbf{m}}\in \supp(R_{I,J})\right\rbrace.
    \end{equation*}
\end{rem}

In turn, this remark has the following consequence.

\begin{prop}\label{binomial}
    Let $\left(\beta_1,\beta_2,\beta_3\right)$ be a PBW sequence for $\Gr\left(3,4\right)$ and $S$ an iterated sequence for $\Gr\left(3,n\right)$. Let $I\in I_{2,n}, J\in I_{4,n}$ and $R_{I,J}\neq 0$. Then $\In_{M_S}\left(R_{I,J}\right)$ is binomial.
\end{prop}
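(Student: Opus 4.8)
The plan is to prove that the initial form $\In_{M_S}(R_{I,J})$ retains at least two of the (at most four) terms of $R_{I,J}$, and never three or four; binomiality is exactly the statement that precisely two survive. First I would recall from Remark \ref{min-max} that all monomials in the support of $R_{I,J}$ lie on a single $\Psi_S$-level set, so that the order $\prec_{\Psi_S}$ on their images $M_S\m_j^t$ collapses to the lexicographic order $<_{lex}$, and selecting the $\prec_{\Psi_S}$-minimum is the same as selecting the $<_{lex}$-maximum of the vectors $\V_S(\overline{p_{I\cup j}}) + \V_S(\overline{p_{J\setminus j}})$. By Corollary \ref{only-depends-on-lex}, each $\V_S(\overline{p_L})$ is itself the $<_{lex}$-maximum over the support of $\phi_S^*(\overline{p_L})$, so everything reduces to a concrete comparison of explicit integer vectors computed by Algorithm \ref{algorithm}.

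Next I would isolate the point that makes the count work out to exactly two: since $\In_{M_S}(\I_{3,n})$ is monomial-free (by \cite[Theorem 1]{Bossinger:2021} together with the Fundamental Theorem of Tropical Geometry, as noted at the start of Section \ref{sec3}), and the Plücker relations form a Gröbner basis with respect to $M_S$, no $\In_{M_S}(R_{I,J})$ can be a monomial; hence at least two terms survive. The remaining task is the upper bound: at most two terms attain the $<_{lex}$-maximum. For the three-term relations this is immediate once one shows no two of the three $<_{lex}$-maxima coincide is \emph{false} — rather, one shows exactly two coincide and are strictly larger than the third; for the four-term relation one shows the four vectors split into two coincident pairs with one pair strictly dominating, or else that the maximum is attained exactly twice among the four. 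I would do this by an explicit induction on $n$ mirroring the inductive structure of iterated sequences (base case $\Gr(3,4)$, where the Plücker ideal is the single quadric $p_{14}p_{23}-p_{13}p_{24}+p_{12}p_{34}$ and one checks the claim by hand using the PBW valuations listed after Algorithm \ref{algorithm}), then using the support formula $\supp(\phi_S^*(\overline{p_I})) = \{x_{l,n}\x^{\b} : \x^{\b}\in\supp(\phi_S^*(\overline{p_{I\setminus n\cup l}})),\ l\in\{l_1,l_2,l_3\},\ l\notin I\setminus n\}$ from the proof of \cite[Lemma 1]{Bossinger:2021} to propagate the pairing structure from $\Gr(3,n-1)$ to $\Gr(3,n)$.

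The main obstacle will be the bookkeeping in the inductive step when the index $n$ (the newly added one in the last iteration) does or does not appear in $I$ or in $J$: the four sets $I\cup j$ and $J\setminus j$ for $j\in J$ interact differently with the support formula depending on which of them contain $n$, and one must track how the extra coordinates $x_{l,n}$ added by Algorithm \ref{algorithm} shift the $<_{lex}$-leading vectors. Concretely, if $n\in J$ then for $j=n$ the term $p_{I\cup n}p_{J\setminus n}$ behaves like a $\Gr(3,n-1)$ quantity (no new variable forced), whereas for $j\neq n$ both factors may pick up an $x_{l,n}$; one must verify that the induced pairing on the $\Gr(3,n-1)$ side — where the inductive hypothesis gives a binomial — survives intact, i.e. that adding the same total $\Psi_S$-contribution $n - l$ to paired vectors preserves both their equality and their dominance over the unpaired ones, and that the relevant $l$'s can be chosen consistently. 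This amounts to a careful but finite case analysis; I expect the cleanest route is to phrase it as: the two $<_{lex}$-maximal terms of $R_{I',J'}$ for the relevant smaller relation $R_{I',J'}$ on $\Gr(3,n-1)$ lift to the two $<_{lex}$-maximal terms of $R_{I,J}$, after checking that the minimal choice of $l\notin I\setminus n$ (which Algorithm \ref{algorithm} makes) is compatible with the pairing. Once that compatibility is established, binomiality of $\In_{M_S}(R_{I,J})$ follows from binomiality of $\In_{M_S}(R_{I',J'})$ by induction, with the base case $n=4$ done explicitly.
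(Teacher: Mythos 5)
Your overall strategy—reducing, via Remark \ref{min-max} and Corollary \ref{only-depends-on-lex}, the $\prec_{\Psi_S}$-minimum to a lexicographic maximum, then inducting on $n$ along the iteration using the support formula from the proof of \cite[Lemma 1]{Bossinger:2021} and showing that a tied, dominating pair of terms lifts from $\Gr(3,n-1)$ to $\Gr(3,n)$—is exactly the route the paper takes. However, there is a concrete error at the foundation of your induction: you take the base case to be $\Gr(3,4)$ "where the Plücker ideal is the single quadric $p_{14}p_{23}-p_{13}p_{24}+p_{12}p_{34}$". That quadric is the Plücker relation of $\Gr(2,4)$; for $\Gr(3,4)$ every relation $R_{I,J}$ vanishes identically (the two potentially nonzero terms cancel, and indeed $\I_{3,4}=0$, $\Gr(3,4)\cong\P^3$), so there is nothing to check there and no pairing to propagate from it. The genuine base cases are $\Gr(3,5)$ for the three-term relations (those with $I\cap J\neq\emptyset$) and $\Gr(3,6)$ for the four-term relations (a four-term relation forces $\#(I\cup J)=6$), and each requires its own explicit analysis—in the paper these are the subcases $I=(r,5)$, $J=(s_1,s_2,s_3,r)$ versus $I=(4,5)$, $J=(1,2,3,5)$, and the two analogous subcases for $n=6$—where one identifies which two products have equal valuation and why they lexicographically dominate the rest. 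Your proposal supplies no argument at these actual starting points, and your inductive step glosses over the dichotomy that makes it work: when the newly used iteration index ($h_t$, resp.\ $b_t$) lies in the relevant index set, the tie is already decided in the first triad by the base-case argument; otherwise all surviving terms share the same first-triad contribution and the comparison is handed off to a strictly smaller Plücker relation on $\Gr(3,n-1)$—including, in the four-term case, a reduction to the already-proved three-term case, so the two cases cannot be run as independent inductions in arbitrary order.

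Two smaller points. Your lower bound ("at least two terms survive") via monomial-freeness of $\In_{M_S}\left(\I_{3,n}\right)$ is legitimate, but the appeal to the Plücker relations forming a Gröbner basis with respect to $M_S$ is both unnecessary for that deduction and unavailable at this stage (in the paper it is only verified computationally, and only for $n=6$); drop it. Moreover this shortcut buys little: the entire content of the proposition is the upper bound, namely that no three or four of the vectors $\V_S\left(\overline{p_{I\cup j}p_{J\setminus j}}\right)$ attain the lexicographic maximum simultaneously, and that is precisely the pairing-and-domination statement your induction must establish—so the explicit base-case computations at $\Gr(3,5)$ and $\Gr(3,6)$ cannot be avoided.
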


By the Remark \ref{min-max}, finding the minima with respect to $\prec_{\Psi_S}$ is equivalent to finding the maxima with respect to $<_{lex}$. This fact will be used throughout the proof of Proposition \ref{binomial}, as well as the Algorithm \ref{algorithm} to compute the valuations. The notation used for the Algorithm \ref{algorithm} will be used here as well: for $u\in [3]$,  $f_u\in \Z^3$ is the unitary vector with $1$ in the $u$-th entry.

The proof is now divided in the two cases, corresponding to the number of terms of a Plücker relation.

\begin{proof}[Proof in the case of 3 terms]
    The proof is done by induction, starting with a Plücker relation for $\Gr\left(3,5\right)$. Consider the following notation for the birational sequences:
    \begin{align*}
        \Gr(3,4): \qquad & S_B  = \left(\beta_1,\beta_2,\beta_3\right),\\
        \Gr(3,5): \qquad & S''  = \left(\epsilon_{l_1}-\epsilon_5,\epsilon_{l_2}-\epsilon_5,\epsilon_{l_3}-\epsilon_5,S_B\right), \\
        \Gr(3,n-1): \qquad &S'  = \left(\epsilon_{j_1}-\epsilon_{n-1},\epsilon_{j_2}-\epsilon_{n-1},\epsilon_{j_3}-\epsilon_{n-1}\dots,S''\right),\\
        \Gr(3,n): \qquad  &S  = \left(\epsilon_{h_1}-\epsilon_n,\epsilon_{h_2}-\epsilon_n,\epsilon_{h_3}-\epsilon_n,S'\right).
    \end{align*} 

    \begin{proof}[Base case]
    There are two different cases
        \begin{itemize}
            \item Let $I=(r,5),J=(s_1,s_2,s_3,r)$, with $r,s_1,s_2,s_3\in [4] $ pairwise different.

            Let $t=\min\lbrace t'\in [3]:l_{t'}\neq r\rbrace$. Since $\lbrace r,s_1,s_2,s_3\rbrace=[4]$, then $l_t\in (s_1,s_2,s_3)$. Write $(s_1',s_2')=(s_1,s_2,s_3)\setminus l_t$ and compute the valuations to get
            \begin{align*}
                \V_S\left(\overline{p_{I\cup s_1'}p_{J\setminus s_1'}}\right) &= \left(f_t,\V_{S_B}(\overline{p_{I\cup s_1'\setminus 5\cup l_t}p_{J\setminus s_1'}})\right),\\
                \V_S\left(\overline{p_{I\cup s_2'}p_{J\setminus s_2'}}\right) &= \left(f_t,\V_{S_B}(\overline{p_{I\cup s_2'\setminus 5\cup l_t}p_{J\setminus s_2'}})\right),\\
                \V_S\left(\overline{p_{I\cup l_{t}}p_{J\setminus l_t}}\right) &= \left(f_u,\V_{S_B}(\overline{p_{I\cup l_t\setminus 5\cup l_{u}}p_{J\setminus l_t}})\right),
            \end{align*} with $u\in [3]$ satisfying $u>t$. Furthermore, the following equation holds
            \begin{equation*}
                \overline{p_{(r,5)\cup s_1' \setminus 5 \cup l_t}p_{(s_1,s_2,s_3,r)\setminus s_1'}} = \overline{p_{(r,5)\cup s_2' \setminus 5 \cup l_t}p_{(s_1,s_2,s_3,r)\setminus s_2'}}.
            \end{equation*} Therefore
            \begin{equation*}
                \V_S\left(\overline{p_{I\cup s_1'}p_{J\setminus s_1'}}\right) = \V_S\left(\overline{p_{I\cup s_2'}p_{J\setminus s_2'}}\right)>_{lex}\V_S\left(\overline{p_{I\cup l_{t}}p_{J\setminus l_t}}\right).
            \end{equation*}

            \item Let $I=(4,5)$ and $J=(1,2,3,5)$. Since $l_1,l_2,l_3\in [4]$, there is a term in $R_{I,J}$ of the form $p_{(l_1,l_2,5)}p_{K}$, with $K=(4,5)\cup s$ o $K=(1,2,3,5)\setminus s$ for some $s\in \lbrace 1,2,3\rbrace$. Let $K_1,K_2,L_1,L_2$ be such that
            \begin{equation*}
                p_{(l_1,l_2,5)}p_K,p_{L_1}p_{K_1}, p_{L_2}p_{K_2}\in \supp\left(R_{I,J}\right),
            \end{equation*} and the valuations are 
            \begin{align*}
                \V_S\left(\overline{p_{(l_1,l_2,5)}p_K}\right) &= \left(f_1+f_3,\mathbf{n}'\right)\\
                \V_S\left(\overline{p_{L_1}p_{K_1}}\right) &= \left(f_1+f_2,\mathbf{n}_1\right)\\
                \V_S\left(\overline{p_{L_2}p_{K_2}}\right) &= \left(f_1+f_2,\mathbf{n}_2\right),
            \end{align*} with $\mathbf{n}',\mathbf{n}_1,\mathbf{n}_2\in \Z^{3(n-1-3)}$. Therefore
            \begin{equation*}
                \V_S\left(\overline{p_{L_1}p_{K_1}}\right) = \V_S\left(\overline{p_{L_2}p_{K_2}}\right) >_{lex} \V_S\left(\overline{p_{(l_1,l_2,5)}p_{K_1}}\right).
            \end{equation*}
        \end{itemize}
        This proves the base case.
    \end{proof}

    Assume that for all Plücker relation $R_{I,J}$ for $\Gr\left(k,n-1\right)$, the initial form $\In_{M_S}\left(R_{I,J}\right)$ is binomial, this is, there are $s_1,s_2,s_3\in J$ such that
    \begin{equation*}
        \V_{S'}\left(\overline{p_{I\cup s_1}p_{J\setminus s_1}}\right) = \V_{S'}\left(\overline{p_{I\cup s_2}p_{J\setminus s_2}}\right) >_{lex} \V_{S'}\left(\overline{p_{I\cup s_3}p_{J\setminus s_3}}\right).
    \end{equation*}

    Let $I\in I_{2,n},J\in I_{4,n}$, and $R_{I,J}\neq 0$ a Plücker relation such that $n\in I\setminus J$ or $n\in I\cap J$. For the inductive step, notice that for $n-1\geq 5$, the equality $\lbrace s_1,s_2,s_3,r\rbrace=[n-1]$ does not hold. Consider the two previous cases.
    \begin{itemize}
        \item Let $I=(r,n)$ and $J=(s_1,s_2,s_3,r)$, with $s_1,s_2,s_3,r\neq n$ pairwise different.
        
        Let $t=\min \lbrace t'\in [3]:h_{t'}\neq r\rbrace$. If $h_t\in (s_1,s_2,s_3)$, the same argument used in the base case applies by changing $5$ for $n$. If this is not the case, for $s\in (s_1,s_2,s_3)$, the valuations are
        \begin{align*}
            \V_S\left(\overline{p_{I\cup s}p_{J\setminus s}}\right) & = \left(f_t,\V_{S'}(\overline{p_{I\cup s\setminus n\cup h_t}p_{J\setminus s}})\right).
        \end{align*} The following equality holds
        \begin{equation*}
            \left\lbrace p_{I\cup s\setminus n\cup h_t}p_{J\setminus s} : s=s_1,s_2,s_3\right\rbrace = \supp\left(R_{I\setminus n\cup h_t,J}\right).
        \end{equation*} By induction, $\In_{M_S}\left(R_{I\setminus n\cup h_t,J}\right)$ is binomial. Therefore, $\In_{M_S}\left(R_{I,J}\right)$ is binomial.

        \item Let $I=(r,n)$ and $J=(s_1,s_2,s_3,n)$, with $s_1,s_2,s_3,r\neq n$ pairwise different.

        If $\lbrace h_1,h_2\rbrace\subset \lbrace s_1,s_2,s_3,r \rbrace$, the same argument used in the base case applies by changing $5$ for $n$. Otherwise, use the inductive hypothesis to ``pass to a lower dimension". This argument is exemplified in the case $h_1\not \in \lbrace s_1,s_2,s_3,r\rbrace$: for $s=s_1,s_2,s_3$, the valuations are
        \begin{equation*}
            \V_S\left(2f_1,\V_{S'}(\overline{p_{I\cup s\setminus n\cup h_1}p_{J\setminus s\setminus n\cup h_1}})\right),
        \end{equation*} and the following equality holds
        \begin{equation*}
            \left\lbrace p_{I\cup s\setminus n\cup h_1}p_{J\setminus s\setminus n\cup h_1}:s=s_1,s_2,s_3\right\rbrace = \supp\left(R_{I\setminus n\cup h_1,J\setminus n\cup h_1}\right).
        \end{equation*} By induction, $\In_{M_S}\left(R_{I\setminus n\cup h_1,J\setminus n\cup h_1}\right)$ is binomial. Therefore, $\In_{M_S}\left(R_{I,J}\right)$ is binomial.
    \end{itemize}
\end{proof}

\begin{proof}[Proof in the case of 4 terms]
    The proof is done by induction, starting with a Plücker relation for $\Gr\left(3,6\right)$. Consider the notation $S_B, S'',S',S$ of the previous proof, and add
    \begin{equation*}
        S''' = \left(\epsilon_{b_1}-\epsilon_6,\epsilon_{b_2}-\epsilon_6,\epsilon_{b_3}-\epsilon_6,S''\right),
    \end{equation*} for the iterated sequence for $\Gr\left(3,6\right)$.

    \begin{proof}[Base case]
        Let $I\in I_{2,6}$, $J\in I_{4,6}$, and $R_{I,J}\neq 0$ a Plücker relation with four terms for $\Gr\left(3,6\right)$. Consider the following two cases
        \begin{itemize}
            \item Let $I=(r,6)$ and $J=(s_1,s_2,s_3,s_4)$ with $\lbrace r,s_1,s_2,s_3,s_4\rbrace=[5]$.

            Let $t=\min\lbrace t'\in [3]:b_{t'}\neq r\rbrace$. Then $b_t\in \lbrace s_1,s_2,s_3,s_4\rbrace$. For all  $s'\in \left(s_1,s_2,s_3,s_4\right)\setminus b_t$ the following inequality holds
            \begin{equation*}
                \V_{S'''}\left(\overline{p_{I\cup s'}p_{J\setminus s'}}\right) >_{lex} \V_{S'''}\left(\overline{p_{I\cup b_t}p_{J\setminus b_t}}\right)
            \end{equation*} and
            \begin{equation*}
                \V_{S'''}\left(\overline{p_{I\cup s'}p_{J\setminus s'}}\right) = \left(f_t,\V_{S''}(\overline{p_{I\cup s'\setminus 6\cup b_t}p_{J\setminus s'}})\right).
            \end{equation*} The following equality holds 
            \begin{equation*}
                \left\lbrace p_{I\cup s'\setminus 6\cup b_t}p_{J\setminus s'} : s'\in (s_1,s_2,s_3,s_4)\setminus b_t \right\rbrace = \supp\left(R_{I\setminus 6\cup b_t,J}\right),
            \end{equation*} and by the previous proof, $\In_{M_S}\left(R_{I\setminus 6\cup b_t,J}\right)$ is binomial. Therefore, $\In_{M_S}\left(R_{I,J}\right)$ is binomial.

            \item Let $I=(r_1,r_2)$ and $J=(s_1,s_2,s_3,6)$, with $\lbrace r_1,r_2,s_1,s_2,s_3\rbrace=[5]$.

            Since $b_1\in [5]$, there are two cases: $b_1\in (r_1,r_2)$ or $b_1\in (s_1,s_2,s_3)$. 

            In the first case, for all $s\in (s_1,s_2,s_3)$, it holds
            \begin{equation*}
                \V_{S'''}\left(\overline{p_{I\cup s}p_{J\setminus s}}\right)>_{lex} \V_{S'''}\left(\overline{p_{I\cup 6}p_{J\setminus 6}}\right)
            \end{equation*} and the following equality holds
            \begin{equation*}
                \left\lbrace p_{I\cup s}p_{J\setminus s\setminus 6\cup b_1}:s\in (s_1,s_2,s_3) \right\rbrace = \supp\left(R_{I,J\setminus 6\cup b_1}\right).
            \end{equation*} By the previous proof, $\In_{M_S}\left(R_{I,J\setminus 6\cup b_1}\right)$ is binomial. Therefore, $\In_{M_S}\left(R_{I,J}\right)$ is binomial.

            For the second case, for $s =  b_1,6$, $r\in (s_1,s_2,s_3)\setminus b_1$, it holds
            \begin{equation*}
                \V_{S'''}\left(\overline{p_{I\cup s}p_{J\setminus s}}\right) >_{lex} \V_{S'''}\left(\overline{p_{I\cup r}p_{J\setminus r}}\right).
            \end{equation*} Writing explicitly the left hand side for $s=6,b_1$, the valuations are
            \begin{align*}
                \V_{S'''}\left(\overline{p_{I\cup s}p_{J\setminus s}}\right) &= \V_{S'''}\left(\overline{p_{(r_1,r_2)\cup 6}p_{(s_1,s_2,s_3,6)\setminus 6}}\right) = \left(f_1,\V_{S''}(\overline{p_{(r_1,r_2)\cup b_1}p_{(s_1,s_2,s_3)}})\right)\\
                 \V_{S'''}\left(\overline{p_{I\cup s}p_{J\setminus s}}\right) &= \V_{S'''}\left(\overline{p_{(r_1,r_2)\cup b_1}p_{(s_1,s_2,s_3,6)\setminus b_1}}\right) = \left(f_1,\V_{S''}(\overline{p_{(r_1,r_2)\cup b_1}p_{(s_1,s_2,s_3,6)\setminus b_1\setminus 6\cup b_1}})\right),
            \end{align*} and thus, $\In_{M_S}\left(R_{I,J}\right)$ is binomial.
        \end{itemize}
    \end{proof}

    Assume that for all Plücker relations $R_{I,J}\neq 0$ with four terms for $\Gr\left(3,n-1\right)$, its initial form $\In_{M_S}\left(R_{I,J}\right)$ is binomial, and consider the two previous cases.

    \begin{itemize}
        \item Let $I = \left(r,n\right)$ and $J = \left(s_1,s_2,s_3,s_4\right)$ with $r,s_1,s_2,s_3,s_4\in [n-1]$ pairwise different.

        Let $t=\min \lbrace t':h_{t'}\neq r \rbrace$. If $h_t\in (s_1,s_2,s_3,s_4)$, then the same argument used in the base case applies by changing 5 for $n$. Otherwise, if  $h_t\not\in (s_1,s_2,s_3,s_4)$, then the valuations are 
        \begin{equation*}
            \V_S\left(\overline{p_{I\cup s}p_{J\setminus s}}\right) = \left(f_t,\V_{S'}(\overline{p_{I\cup s\setminus n\cup h_t}p_{J\setminus s}})\right),
        \end{equation*} and the following equality holds
        \begin{equation*}
            \left\lbrace p_{I\cup s \setminus n\cup h_t}p_{J\setminus s}:s\in (s_1,s_2,s_3,s_4) \right\rbrace = \supp\left(R_{I\setminus n\cup h_t,J}\right),
        \end{equation*} with $R_{I\setminus n\cup h_t,J}$ a Plücker relation with 4 terms for $\Gr\left(k,n-1\right)$. By induction, $\In_{M_S}\left(R_{I\setminus n\cup h_t,J}\right)$ is binomial. Therefore, $\In_{M_S}\left(R_{I,J}\right)$ is binomial.

        \item Let $I=(r_1,r_2)$ y $J=(s_1,s_2,s_3,n)$, with $r,s_1,s_2,s_3,s_4\in [n-1]$ pairwise different. 

        If $h_1\in (r_1,r_2,s_1,s_2,s_3)$, the same argument used in the base case applies. If this is not the case, this is, if $h_1\not\in (r_1,r_2,s_1,s_2,s_3)$, then for all $s\in (s_1,s_2,s_3)$, the valuations are 
        \begin{align*}
            \V_S\left(\overline{p_{I\cup n}p_{J\setminus n}}\right) &= \left(f_1,\V_{S'}(\overline{p_{I\cup h_1}p_{J\setminus n}})\right),\\
            \V_S\left(\overline{p_{I\cup s}p_{J\setminus s}}\right) &=\left(f_1,\V_{S'}(\overline{p_{I\cup s}p_{J\setminus s\setminus n\cup h_1}})\right).
        \end{align*}  The following equality holds
        \begin{equation*}
            \left\lbrace p_{I\cup h_1}p_{J\setminus n},p_{I\cup s}p_{J\setminus s\cup n\cup h_1}:s=s_1,s_2,s_3\right\rbrace = \supp\left(R_{I,J\setminus n\cup h_1}\right).
        \end{equation*} By induction, $\In_{M_S}\left(R_{I,J\setminus n\cup h_1}\right)$ is binomial. Therefore, $\In_{M_S}\left(R_{I,J}\right)$ is binomial.
    \end{itemize}
\end{proof}

\subsection*{Initial ideals}
Denote $\mathcal{S}_{3,n}:=\left\lbrace \textrm{iterated sequences for }\Gr(3,6)\right\rbrace$. Let $S\in \mathcal{S}_{3,n}$. By \cite[Theorem 1]{Bossinger:2021}, there exists $C_S\subset \trop\left(\Gr(3,n)\right)$ such that 
\begin{equation}\label{igualdad-de-iniciales}
    \In_{M_S}\left(\I_{3,n}\right) = \In_{C_S}\left(\I_{3,n}\right).
\end{equation}

In order to find $C_S$, the notion of \emph{order preserving projection} is needed: given the weighting matrix $M_S:=\left(\V_S(\overline{p_K})^t\right)_{K\in I_{3,n}}$, a projection $e_S:\Z^{3(n-3)}\rightarrow \Z$ is \emph{order preserving} if
\begin{equation*}
    \In_{M_S}\left(\I_{3,n}\right) = \In_{e_S M_S}\left(\I_{3,n}\right),
\end{equation*} with $e_S M_S := \left(e_S(\V_S(\overline{p_K}))\right)_{K\in I_{3,n}}$. If this is the case, then $w_S := e_SM_S\in \trop\left(\Gr(3,n)\right)$, which in turn defines $C_S$.

The inequalities from the computation of initial forms of Plücker relations will be used to compute these projections. Let $R_{I,J}\neq 0$ ($I\in I_{2,n},J\in I_{4,n}$) be a Plücker relation. A projection $e_S:\Z^{3(n-3)}\rightarrow \Z$ \emph{preserves inequalities} with respect to $S$ if the following condition is satisfied: if $s_1,s_2\in J$ satisfy 
\begin{equation*}
    \V_S\left(\overline{p_{I\cup s_1}p_{J\setminus s_1}}\right) \prec_{\Psi_S} \V_S\left(\overline{p_{I\cup s_2}p_{J\setminus s_2}}\right),
\end{equation*} then
\begin{equation*}
    e_S\left(\V_S\overline{p_{I\cup s_1}p_{J\setminus s_1}})\right) <e_S\left(\V_S(\overline{p_{I\cup s_2}p_{J\setminus s_2}})\right).
\end{equation*}

By the proof of \cite[Lemma 3]{Bossinger_2020}, before concluding that Equation \eqref{igualdad-de-iniciales} holds for the vector obtained only from the inequalities of the initial forms of Plücker relations, it is necessary to verify that the set $\left\lbrace R_{I,J}\neq 0:I\in I_{2,n},J\in I_{4,n}\right\rbrace$ is a reduced Gröbner basis for $\I_{3,6}$ (with respect to $M_S$). If this condition holds, the following two Propositions show that the total number of weight vectors $w_S\in \trop\left(\Gr(3,n)\right)$ reduces significantly, thus reducing the total number of degenerations to study.

\begin{prop}\label{perm-equality}
    Let $(\beta_1,\beta_2,\beta_3)$ be a PBW sequence for $\Gr\left(3,4\right)$ and $\sigma\in S_3$. Consider the following iterated sequences for $\Gr\left(3,n\right)$
    \begin{align*}
        S & := \left(\epsilon_{i_1}-\epsilon_{n}, \epsilon_{i_2}-\epsilon_n,\epsilon_{i_3}-\epsilon_n,\dots,\beta_1,\beta_2,\beta_3\right)\\
        S_{\sigma} & := \left(\epsilon_{i_1}-\epsilon_n,\epsilon_{i_2}-\epsilon_n,\epsilon_{i_3}-\epsilon_n,\dots,\beta_{\sigma(1)},\beta_{\sigma(2)},\beta_{\sigma(3)}\right).
    \end{align*}
    
    Given  $e_S$ a projection that preserves inequalities with respect to $S$, there exists a projection $e_{S_{\sigma}}$ that preserves inequalities with respect to $S_{\sigma}$ and satisfies
    \begin{equation*}
        e_{S_{\sigma}}M_{S_{\sigma}} = e_{S}M_S.
    \end{equation*}
\end{prop}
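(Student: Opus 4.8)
The plan is to build $e_{S_\sigma}$ from $e_S$ by precomposing with a permutation of the last three coordinates of $\Z^{3(n-3)}$. Write $d=3(n-3)$ and let $\Pi_\sigma\in\GL_d(\Z)$ be the automorphism that fixes the first $3(n-4)$ coordinates and permutes the last three by $(\Pi_\sigma\mathbf{w})_{3(n-4)+l}=\mathbf{w}_{3(n-4)+\sigma(l)}$ for $l=1,2,3$; note $\Pi_\sigma^{-1}=\Pi_{\sigma^{-1}}$. The first step is to prove
\begin{equation*}
\V_{S_\sigma}(\overline{p_K})=\Pi_\sigma\bigl(\V_S(\overline{p_K})\bigr)\qquad\text{for every }K\in I_{3,n}.
\end{equation*}
The roots $\epsilon_1-\epsilon_4,\epsilon_2-\epsilon_4,\epsilon_3-\epsilon_4$ generate commuting one-parameter subgroups, so reordering the $\Gr(3,4)$-part of the sequence changes neither $\im(\psi_S)$ nor the birational map: $\phi_{S_\sigma}=\phi_S$. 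Hence $\phi_{S_\sigma}^*(\overline{p_K})$ and $\phi_S^*(\overline{p_K})$ are the same rational function, and the two coordinate identifications $(*)$ differ only by relabeling the last three variables via $\sigma$; therefore $\supp(\phi_{S_\sigma}^*(\overline{p_K}))=\Pi_\sigma\bigl(\supp(\phi_S^*(\overline{p_K}))\bigr)$. On the other hand, the recursive description of $\supp(\phi_S^*(\overline{p_K}))$ in the proof of \cite[Lemma 1]{Bossinger:2021} shows, by induction on $n$, that two distinct monomials of $\supp(\phi_S^*(\overline{p_K}))$ already differ in one of the first $3(n-4)$ coordinates (the block attached to the iteration step at level $r$, $5\le r\le n$, records which index was used there). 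Since $\Pi_\sigma$ is the identity on those coordinates, it commutes with $\max_{<_{lex}}$ on $\supp(\phi_S^*(\overline{p_K}))$, and Corollary \ref{only-depends-on-lex} (valid for $S$ and $S_\sigma$, both iterated from a PBW sequence) yields $\V_{S_\sigma}(\overline{p_K})=\max_{<_{lex}}\Pi_\sigma(\supp(\phi_S^*(\overline{p_K})))=\Pi_\sigma(\V_S(\overline{p_K}))$.

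With this in hand I set $e_{S_\sigma}:=e_S\circ\Pi_\sigma^{-1}$. For every $K\in I_{3,n}$ this gives $e_{S_\sigma}(\V_{S_\sigma}(\overline{p_K}))=e_S(\Pi_\sigma^{-1}\Pi_\sigma\V_S(\overline{p_K}))=e_S(\V_S(\overline{p_K}))$, that is, $e_{S_\sigma}M_{S_\sigma}=e_SM_S$ entrywise. It then remains to check that $e_{S_\sigma}$ preserves inequalities with respect to $S_\sigma$. Fix $R_{I,J}\ne 0$ with $I\in I_{2,n}$, $J\in I_{4,n}$, and $s_1,s_2\in J$ with $p_{I\cup s_i}p_{J\setminus s_i}\ne 0$ and $\V_{S_\sigma}(\overline{p_{I\cup s_1}p_{J\setminus s_1}})\prec_{\Psi_{S_\sigma}}\V_{S_\sigma}(\overline{p_{I\cup s_2}p_{J\setminus s_2}})$. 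By additivity of valuations on products and the identity above, $\V_{S_\sigma}(\overline{p_{I\cup s}p_{J\setminus s}})=\Pi_\sigma(\V_S(\overline{p_{I\cup s}p_{J\setminus s}}))$ for $s=s_1,s_2$. By Remark \ref{min-max} applied to $S_\sigma$, $\Psi_{S_\sigma}$ is constant on $\{\V_{S_\sigma}(\overline{p_{I\cup s}p_{J\setminus s}}):s\in J\}$, so on that set $\prec_{\Psi_{S_\sigma}}$ coincides with the reverse lexicographic order; hence $\Pi_\sigma(\V_S(\overline{p_{I\cup s_1}p_{J\setminus s_1}}))>_{lex}\Pi_\sigma(\V_S(\overline{p_{I\cup s_2}p_{J\setminus s_2}}))$.

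The last point — and the step I expect to be the real obstacle — is that $\Pi_\sigma$ does not change this comparison. Inspecting the proof of Proposition \ref{binomial}, every strict inequality $\V_S(\overline{p_{I\cup s}p_{J\setminus s}})>_{lex}\V_S(\overline{p_{I\cup s'}p_{J\setminus s'}})$ between valuations of terms of a Plücker relation of $\Gr(3,n)$ is seen either directly in the coordinate block attached to a root $\epsilon_h-\epsilon_r$ with $5\le r\le n$, or, after "passing to a lower dimension", is reduced to such a comparison; in every case the first coordinate in which the two valuations differ lies among the first $3(n-4)$ ones, never in the last block (the one attached to the PBW sequence for $\Gr(3,4)$). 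As $\Pi_\sigma$ fixes those coordinates, we get $\V_S(\overline{p_{I\cup s_1}p_{J\setminus s_1}})>_{lex}\V_S(\overline{p_{I\cup s_2}p_{J\setminus s_2}})$, which by Remark \ref{min-max} for $S$ is $\V_S(\overline{p_{I\cup s_1}p_{J\setminus s_1}})\prec_{\Psi_S}\V_S(\overline{p_{I\cup s_2}p_{J\setminus s_2}})$. Since $e_S$ preserves inequalities with respect to $S$, $e_S(\V_S(\overline{p_{I\cup s_1}p_{J\setminus s_1}}))<e_S(\V_S(\overline{p_{I\cup s_2}p_{J\setminus s_2}}))$, and by the computation $e_{S_\sigma}\circ\V_{S_\sigma}=e_S\circ\V_S$ on products this is exactly $e_{S_\sigma}(\V_{S_\sigma}(\overline{p_{I\cup s_1}p_{J\setminus s_1}}))<e_{S_\sigma}(\V_{S_\sigma}(\overline{p_{I\cup s_2}p_{J\setminus s_2}}))$, as required. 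If this "early decidability" failed for some relation, the naive twist $e_S\circ\Pi_\sigma^{-1}$ could violate an inequality whose sign is reversed by $\Pi_\sigma$, so this is the crux that genuinely uses Proposition \ref{binomial}.
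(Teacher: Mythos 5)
Your proposal is correct and follows essentially the same route as the paper: you build $e_{S_\sigma}$ by permuting the last three entries of $e_S$ (your $e_S\circ\Pi_\sigma^{-1}$ is the paper's explicit formula), use the identity $\V_{S_\sigma}(\overline{p_K})=\Pi_\sigma(\V_S(\overline{p_K}))$, and invoke exactly the paper's key observation (its Remark \ref{obs1}, extracted from the proof of Proposition \ref{binomial}) that the lexicographic tie between valuations of terms of a Plücker relation is always broken before the final $\Gr(3,4)$ triad, so the permutation cannot flip any inequality. Your extra justifications (commutativity of the $U_{\epsilon_i-\epsilon_4}$ giving $\phi_{S_\sigma}=\phi_S$, and distinct support monomials already differing in the first $3(n-4)$ coordinates) only flesh out steps the paper asserts directly.
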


The following remark from the proof of Proposition \ref{binomial} will be needed.

\begin{rem}\label{obs1}
    Let $R_{I,J}\neq 0$ be a Plücker relation and $s_1,s_2\in J$ be such that
    \begin{equation*}
        \V_S\left(\overline{p_{I\cup s_1}p_{J\setminus s_1}}\right)\prec_{\Psi_S}\V_S\left(\overline{p_{I\cup s_2}p_{J\setminus s_2}}\right).
    \end{equation*} By the proof of Proposition \ref{binomial}, the comparison of these two elements is firstly done in the first three entries; if these entries coincide, pass to the next triad; inductively, repeat until reaching the second-to-last triad, where the tie must break. 
\end{rem}

\begin{proof}[Proof of Proposition \ref{perm-equality}]
    Let $S$ and $S_{\sigma}$ be the two iterated sequences and $\sigma\in S_3$. Write the projection $e_S:\Z^{3(n-3)}\rightarrow \Z$ as follows
    \begin{equation*}
        e_S = \left(e_1, \dots, e_{3(n-1-3)},e_{3(n-1-3) + 1},e_{3(n-1-3) + 2},e_{3(n-1-3) + 3}\right).
    \end{equation*} Let $I\in I_{3,n}$ be a multiindex, $\overline{p_I}$ a Plücker coordinate, and $\mathbf{m}\in \Z^{3(n-1-3)}, \mathbf{n}\in \Z^3$ such that 
    \begin{equation*}
        \V_S\left(\overline{p_I}\right) = \left(\mathbf{m},\mathbf{n}\right).
    \end{equation*} Then
    \begin{equation*}
        \V_{S_{\sigma}}\left(\overline{p_I}\right) = \left(\mathbf{m},\sigma\mathbf{n}\right),
    \end{equation*} with $\sigma\mathbf{n}$ is obtained by permuting the entries of $\mathbf{n}$ using $\sigma$.

    Define $e_{S_{\sigma}}:\Z^{3(n-3)}\rightarrow \Z$ as
    \begin{equation*}
        e_{S_{\sigma}} := \left(e_1,\dots,e_{3(n-1-3)}, e_{3(n-1-3)+\sigma(1)},e_{3(n-1-3)+\sigma(2)},e_{3(n-1-3)+\sigma(3)}\right).
    \end{equation*} Notice that for all $I\in I_{3,n}$, the following equality holds:
    \begin{equation*}
        e_S\left(\V_S(\overline{p_I})\right) = e_{S_{\sigma}}\left(\V_{S_{\sigma}}(\overline{p_I})\right).
    \end{equation*} Now, for a Plücker relation $R_{I,J}\neq 0$ and $s_1,s_2\in J$ satisfying
    \begin{equation*}
        \V_S\left(\overline{p_{I\cup s_1}p_{J\setminus s_1}}\right) \prec_{\Psi_S} \V_S\left(\overline{p_{I\cup s_2}p_{J\setminus s_2}}\right),
    \end{equation*} by the Remark \ref{obs1} and the computation of valuations $\V_{S_{\sigma}}$, the following inequality holds
    \begin{equation*}
        \V_{S_{\sigma}}\left(\overline{p_{I\cup s_1}p_{J\setminus s_1}}\right) \prec_{\Psi_{S_{\sigma}}} \V_{S_{\sigma}}\left(\overline{p_{I\cup s_2}p_{J\setminus s_2}}\right).
    \end{equation*} Since $e_S$ and $e_{S_{\sigma}}$ are $\Z$-linear, the following inequality is true
    \begin{equation*}
        e_{S_{\sigma}} \left(\V_{S_{\sigma}}(\overline{p_{I\cup s_1}p_{J\setminus s_1}})\right) = e_S\left(\V_S(\overline{p_{I\cup s_1}p_{J\setminus s_1}})\right) < e_S\left(\V_S(\overline{p_{I\cup s_2}p_{J\setminus s_2}})\right) = e_{S_{\sigma}}\left(\V_{S_{\sigma}}(\overline{p_{I\cup s_2}p_{J\setminus s_2}})\right),
    \end{equation*} which proves that $e_{S_{\sigma}}$ preserves inequalities with respect to $S_{\sigma}$.
\end{proof}

At this point, recall \cite[Proposition 1]{Bossinger:2021}: given an iterated sequence $S$ for $\Gr\left(2,n\right)$, there exists a trivalent tree $T_S$ that satisfies $\In_{M_S}\left(\I_{2,n}\right) = \In_{T_S}\left(\I_{2,n}\right)$. This tree is the output of \cite[Algorithm 1]{Bossinger:2021}, which only depends on the first index of each iteration. Thus, it is natural to wonder if this the case for an iterated sequence for $\Gr\left(3,n\right)$. The answer is positive, and is the content of the next Proposition.

\begin{prop}\label{change-equality}
    Let $S'$ be an iterated sequence for $\Gr\left(3,n-1\right)$. Let $i_1,i_2\in [n-1]$ and $i,j\in [n-1]\setminus \lbrace i_1,i_2\rbrace$. Consider the following iterated sequences for $\Gr\left(3,n\right)$
    \begin{align*}
        S_1 & = \left(\epsilon_{i_1}-\epsilon_n,\epsilon_{i_2}-\epsilon_n,\epsilon_{i}-\epsilon_n,S'\right)\\
        S_2 & = \left(\epsilon_{i_1}-\epsilon_n,\epsilon_{i_2}-\epsilon_n,\epsilon_j-\epsilon_n,S'\right).
    \end{align*} 
    
    Given a projection $e_{S_1}$ that preserves inequalities with respect to $S_1$, there exists a projection $e_{S_2}$ that preserves inequalities with respect to $S_{2}$ and satisfies
    \begin{equation*}
        e_{S_1}M_{S_1} = e_{S_2}M_{S_2}.
    \end{equation*}
\end{prop}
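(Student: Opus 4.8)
The strategy mirrors the proof of Proposition \ref{perm-equality}: I will write down explicitly how the valuations $\V_{S_1}$ and $\V_{S_2}$ of a Plücker coordinate $\overline{p_K}$ differ, produce $e_{S_2}$ from $e_{S_1}$ by an explicit modification, and then check that the inequality-preserving property transfers. The essential observation is that the two iterated sequences $S_1$ and $S_2$ differ only in the \emph{third} root used in the last iteration ($\epsilon_i-\epsilon_n$ versus $\epsilon_j-\epsilon_n$), while the first two roots $\epsilon_{i_1}-\epsilon_n, \epsilon_{i_2}-\epsilon_n$ and the whole tail $S'$ agree. By Algorithm \ref{algorithm}, the valuation of $\overline{p_K}$ is computed by peeling off the index $n$ (if $n\in K$) using the first available root among $\epsilon_{i_1}-\epsilon_n,\epsilon_{i_2}-\epsilon_n,\epsilon_i-\epsilon_n$ whose lower index is not already in $K$, recording $f_{r+j}$ accordingly, and then continuing with $S'$ on a multiindex in $I_{3,n-1}$. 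So the first three coordinates of $\V_{S_1}(\overline{p_K})$ and $\V_{S_2}(\overline{p_K})$ record \emph{which} of the three roots was used, and the remaining $3(n-1-3)$ coordinates are the valuation $\V_{S'}$ of a modified Plücker coordinate in $\Gr(3,n-1)$.

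\textbf{Key steps.} First I would split according to whether $n\in K$. If $n\notin K$, then the index $n$ is never peeled, the first three coordinates of both valuations are $\mathbf 0\in\Z^3$, and $\V_{S_1}(\overline{p_K})=(\mathbf 0,\V_{S'}(\overline{p_K}))=\V_{S_2}(\overline{p_K})$; so those coordinates contribute nothing. If $n\in K$, Algorithm \ref{algorithm} uses the first root $\epsilon_{i_u}-\epsilon_n$ (with the convention $i_3:=i$ for $S_1$ and $i_3:=j$ for $S_2$) whose lower index $i_u\notin K\setminus\{n\}$; the key point is that whether the third root is used at all, and with what resulting modified multiindex, depends only on whether $i_1,i_2\in K$: if at least one of $i_1,i_2$ is outside $K\setminus\{n\}$, the peeling uses one of the first two roots and the third root (hence the discrepancy between $i$ and $j$) is irrelevant, so $\V_{S_1}(\overline{p_K})=\V_{S_2}(\overline{p_K})$ outright. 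Only when $i_1,i_2\in K\setminus\{n\}$ must the third root be used, contributing $f_3\in\Z^3$ in both sequences and then peeling to $K\setminus\{n\}\cup\{i\}$ for $S_1$ versus $K\setminus\{n\}\cup\{j\}$ for $S_2$; write $\V_{S'}(\overline{p_{K\setminus n\cup i}})=(\m_i,\n)$ and observe, using Corollary \ref{only-depends-on-lex} and Remark \ref{obs1}, that replacing $i$ by $j$ changes only the block of $\V_{S'}$ coordinates corresponding to the iteration of $S'$ that peels $i$ resp. $j$ — and even then only permutes or relocates entries in a controlled way, since $i$ and $j$ play symmetric roles inside $S'$ as available lower indices. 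I then define $e_{S_2}$ to be $e_{S_1}$ with the corresponding entries permuted (exactly as in the proof of Proposition \ref{perm-equality}, but now the permutation lives in the sub-block of coordinates indexed by the iterations of $S'$ where $i$ or $j$ is introduced), so that $e_{S_1}(\V_{S_1}(\overline{p_K}))=e_{S_2}(\V_{S_2}(\overline{p_K}))$ for every $K\in I_{3,n}$. Finally, I verify the inequality-preserving property: for $R_{I,J}\neq 0$ and $s_1,s_2\in J$ with $\V_{S_1}(\overline{p_{I\cup s_1}p_{J\setminus s_1}})\prec_{\Psi_{S_1}}\V_{S_1}(\overline{p_{I\cup s_2}p_{J\setminus s_2}})$, Remark \ref{obs1} says the tie is broken in some triad; by the above description the same triad breaks the tie for $\V_{S_2}$ in the same direction, so $\V_{S_2}(\overline{p_{I\cup s_1}p_{J\setminus s_1}})\prec_{\Psi_{S_2}}\V_{S_2}(\overline{p_{I\cup s_2}p_{J\setminus s_2}})$, and then $\Z$-linearity of $e_{S_1}=e_{S_2}\circ(\text{permutation})$ together with $e_{S_1}M_{S_1}=e_{S_2}M_{S_2}$ gives the strict inequality for $e_{S_2}$.

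\textbf{Main obstacle.} The delicate point is the case $i_1,i_2\in K\setminus\{n\}$: I must argue precisely that swapping the third root's lower index from $i$ to $j$ has an effect on $\V_{S'}(\overline{p_{K\setminus n\cup i}})$ versus $\V_{S'}(\overline{p_{K\setminus n\cup j}})$ that is accounted for by a \emph{fixed} permutation of coordinates of $\Z^{3(n-3)}$, uniform over all $K$ — i.e., that the permutation relating $S_1$-data to $S_2$-data does not depend on $K$. This requires tracking, through all the iterations of $S'$, the single iteration at which the newly-introduced index $i$ (resp. $j$) gets peeled off, and checking that it occupies the analogous slot; this is where the structure of iterated sequences and the determinism of Algorithm \ref{algorithm} must be used carefully, together with Lemma \ref{homogeneidad} to guarantee that $\Psi_{S_1}$- and $\Psi_{S_2}$-degrees coincide so that no cross-triad interference occurs. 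Once this bookkeeping is set up, the transfer of inequalities and the identity $e_{S_1}M_{S_1}=e_{S_2}M_{S_2}$ follow formally as in Proposition \ref{perm-equality}.
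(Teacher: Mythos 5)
Your reduction to the case $n\in K$ with $i_1,i_2\in K\setminus\lbrace n\rbrace$ is correct, but at that point you miss the decisive simplification: since $\# K=3$, that case occurs for \emph{exactly one} Plücker coordinate, namely $K=(i_1,i_2,n)$. Hence $M_{S_1}$ and $M_{S_2}$ differ in a single column, and the third coordinate of the first triad (the slot recording the use of $\epsilon_i-\epsilon_n$, resp.\ $\epsilon_j-\epsilon_n$) is nonzero in that column only. The paper's proof exploits precisely this: it keeps $e_1,e_2$ and all later entries of $e_{S_1}$ and replaces only the third entry by the unique value that makes $e_{S_2}\left(\V_{S_2}(\overline{p_{(i_1,i_2,n)}})\right)=e_{S_1}\left(\V_{S_1}(\overline{p_{(i_1,i_2,n)}})\right)$; since that slot appears in no other column, every other value $e_{S_2}(\V_{S_2}(\overline{p_K}))$ is automatically unchanged, and Remark \ref{obs2} (not Remark \ref{obs1}, which concerns the permuted last triad of Proposition \ref{perm-equality}) — comparisons between terms of a Plücker relation are decided on the first two entries of the triads — transfers the inequalities from $S_1$ to $S_2$.

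Your construction instead rests on the claim that $\V_{S'}(\overline{p_{K\setminus n\cup i}})$ and $\V_{S'}(\overline{p_{K\setminus n\cup j}})$ are related by a fixed permutation of the $S'$-coordinates because ``$i$ and $j$ play symmetric roles inside $S'$''. This is not true in general: $S'$ is a fixed iterated sequence and $i,j\in[n-1]\setminus\lbrace i_1,i_2\rbrace$ are arbitrary, so they may enter the iterations of $S'$ in entirely different positions (or one of them may never occur as an iteration index), and the two valuation vectors need not be related by any coordinate permutation. Moreover, even if such a permutation existed, permuting entries of $e_{S_1}$ inside the $S'$-block would alter $e(\V(\overline{p_K}))$ for all the other Plücker coordinates whose valuations are supported in that block, so the identity $e_{S_1}M_{S_1}=e_{S_2}M_{S_2}$ would fail on those columns; the permutation trick of Proposition \ref{perm-equality} works only because there \emph{every} column transforms by the same permutation of the last triad, whereas here only one column changes. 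This is exactly the obstacle you flag at the end of your proposal, and the route you sketch does not resolve it; what closes the argument is the one-column observation together with adjusting the single entry $e_3$ to solve the resulting linear condition.
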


There is a remark analogous to Remark \ref{obs1}.

\begin{rem}\label{obs2}
    Let $S=\left(\epsilon_{i_1}-\epsilon_n,\epsilon_{i_2}-\epsilon_n,\epsilon_{i_3}-\epsilon_n,\dots\right)$ be an iterated sequence for $\Gr\left(3,n\right)$, $R_{I,J}\neq 0$ a Plücker relation with $n\in I$ or $n\in J$, and $s_1,s_2\in J$ such that
    \begin{equation*}
        \V_S\left(\overline{p_{I\cup s_1}p_{J\setminus s_1}}\right) \prec_{\Psi_S} \V_S\left(\overline{p_{I\cup s_2}p_{J\setminus s_2}}\right).
    \end{equation*}
    By the proof of Proposition \ref{binomial}, the comparison of  these two elements is first done in the first two entries of the first triad; if they coincide, pass to the first two elements of the second triad; inductively, repeat until reaching the second-to-last triad where the tie must break.
\end{rem}

\begin{proof}[Proof of Proposition \ref{change-equality}]
    Let $S_1$ and $S_2$ be two iterated sequences for $\Gr\left(3,n\right)$. Write the projection $e_{S_1}:\Z^{3(n-3)}\rightarrow \Z$ as follows
    \begin{equation*}
        e_{S_1} = \left(e_1,e_2,e_3,\dots,e_{3(n-3)}\right).
    \end{equation*}
    Let $I=(i_1,i_2,n)\in I_{3,n}$ and $\overline{p_I}$ the Plücker coordinate. The valuations of $\overline{p_I}$ are
    \begin{align*}
        \V_{S_1}\left(\overline{p_I}\right) & = \left(f_3,\V_{S'}(\overline{p_{I\setminus n\cup i}})\right),\\
        \V_{S_2}\left(\overline{p_I}\right) & = \left(f_3,\V_{S'}(\overline{p_{I\setminus n\cup j}})\right).
    \end{align*} Notice that this is the only Plücker coordinate that satisfies
    \begin{equation*}
        \V_{S_1}\left(\overline{p_I}\right) \neq \V_{S_2}\left(\overline{p_I}\right).
    \end{equation*} Define $e_{S_2}:\Z^{3(n-3)}\rightarrow \Z$ as
    \begin{equation*}
        e_{S_2} :=\left(e_1,e_2,e_{S_1}(\V_{S_1}(\overline{p_I})-\V_{S_2}(\overline{p_{I\setminus n\cup j}})),\dots,e_{3(n-3)}\right).
    \end{equation*} Notice that for $I\in I_{3,n}$ with $I\neq (i_1,i_2,n)$, the equality holds
    \begin{equation*}
        e_{S_2}\left(\V_{S_2}(\overline{p_I})\right) = e_{S_1}\left(\V_{S_1}(\overline{p_I})\right).
    \end{equation*} For $I=(i_1,i_2,n)$, the equality holds as well:
    \begin{equation*}
        e_{S_2}\left(\V_{S_2}(\overline{p_I})\right) = e_{S_1}\left(\V_{S_1}(\overline{p_I})\right) - e_{S_2}\left(\V_{S_2}(\overline{p_{I\setminus n\cup j}})\right) + e_{S_2}\left(\V_{S_2}(\overline{p_{I\setminus n\cup j}})\right) = e_{S_1}\left(\V_{S_1}(\overline{p_I})\right).
    \end{equation*}
    Now, given a Plücker relation $R_{I,J}\neq 0$ with $s_1,s_2\in J$ such that
    \begin{equation*}
        \V_{S_1}\left(\overline{p_{I\cup s_1}p_{J\setminus s_1}}\right) \prec_{\Psi_{S_1}} \V_{S_1}\left(\overline{p_{I\cup s_2}p_{J\setminus s_2}}\right),
    \end{equation*} by Remark \ref{obs2} and the computation of the valuations, the following inequality holds
    \begin{equation*}
        \V_{S_2}\left(\overline{p_{I\cup s_1}p_{J\setminus s_1}}\right) \prec_{\Psi_{S_2}} \V_{S_2}\left(\overline{p_{I\cup s_2}p_{J\setminus s_2}}\right).
    \end{equation*} Since $e_{S_1}$ and $e_{S_2}$ are $\Z$-linear, the following inequality is satisfied
    \begin{equation*}
        e_{S_1}\left(\V_{S_1}(\overline{p_{I\cup s_1}p_{J\setminus s_1}})\right)  = e_{S_2}\left(\V_{S_2}(\overline{p_{I\cup s_1}p_{J\setminus s_1}})\right) < e_{S_1}\left(\V_{S_1}(\overline{p_{I\cup s_2}p_{J\setminus s_2}})\right) = e_{S_2}\left(\V_{S_2}(\overline{p_{I\cup s_2}p_{J\setminus s_2}})\right),
    \end{equation*} which proves that $e_{S_2}$ preserves inequatilies with respect to $S_2$.
\end{proof}

Consider from now on that every iterated sequence for $\Gr\left(3,n\right)$ is iterated from a PBW sequence for $\Gr\left(3,4\right)$. A direct computation shows that
\begin{equation*}
    \# \mathcal{S}_{3,n} = \prod_{l=0}^{n-4} \left(n-l-1\right)\left(n-l-2\right)\left(n-l-3\right).
\end{equation*}

Let $S_1 ,S_2\in \mathcal{S}_{3,n}$ and assume that the first two indices of every iteration coincide. Given a projection $e_{S_1}$ that preserves inequalities with respect to $S_1$, by Proposition \ref{change-equality} there exists a projection $e_{S_2}$ that preserves inequalities with respect to $S_2$ and satisfies
\begin{equation*}
    e_{S_1}M_{S_1} = e_{S_2}M_{S_2}.
\end{equation*} Furthermore, if $S_1$ is iterated from the PBW sequence $(\beta_1,\beta_2,\beta_3)$ and $S_2$ is iterated from a PBW sequence $(\beta_{\sigma(1)},\beta_{\sigma(2)},\beta_{\sigma(3)})$ with $\sigma\in S_3$, then by Proposition \ref{perm-equality}, the same equality holds. Thus, and by the proof of \cite[Lemma 3]{Bossinger_2020}, the following proposition is proved.

\begin{prop}\label{number_of_vectors}
    If $\left\lbrace R_{I,J}\neq 0:I\in I_{2,n},J\in I_{4,n} \right\rbrace$ is a reduced Gröbner basis of $\I_{3,n}$ with respect to $M_{S}$ for all $S\in \mathcal{S}_{3,n}$, then
    \begin{equation*}
        \#\left\lbrace \In_{M_S}(\I_{3,n}):S\in \mathcal{S}_{3,n}\right\rbrace \leq \prod_{l=0}^{n-5}\left(n-l-1\right)\left(n-l-2\right).
    \end{equation*}
\end{prop}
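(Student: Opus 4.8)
The plan is to show that $S\mapsto\In_{M_S}(\I_{3,n})$ is constant on the equivalence classes of the relation $\approx$ on $\mathcal{S}_{3,n}$ given by $S_1\approx S_2$ if and only if the first two indices used at each of the $n-4$ iteration steps coincide, and then to count the $\approx$-classes. For the first part, recall that $\{R_{I,J}\neq 0:I\in I_{2,n},J\in I_{4,n}\}$ is a Gröbner basis of $\I_{3,n}$ with respect to every $M_S$ and that, by Proposition \ref{binomial}, each $\In_{M_S}(R_{I,J})$ is binomial; under the standing hypothesis this set is moreover a reduced Gröbner basis, so the proof of \cite[Lemma 3]{Bossinger_2020} shows that any projection $e_S$ preserving inequalities with respect to $S$ — one exists for every $S$, as recalled above — is order preserving, whence $w_S:=e_SM_S\in\trop(\Gr(3,n))$ and $\In_{M_S}(\I_{3,n})=\In_{e_SM_S}(\I_{3,n})$. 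In particular, if $S_1$ and $S_2$ admit inequality-preserving projections with $e_{S_1}M_{S_1}=e_{S_2}M_{S_2}$, then $\In_{M_{S_1}}(\I_{3,n})=\In_{M_{S_2}}(\I_{3,n})$; so it is enough to produce such projections whenever $S_1\approx S_2$.

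I would prove this by induction on $n$. For $n=4$ there are no iterations and $S_1,S_2$ differ only by a permutation of the base PBW sequence, which is exactly the situation of Proposition \ref{perm-equality}. For the inductive step, write $S_1=(\epsilon_{a_1}-\epsilon_n,\epsilon_{a_2}-\epsilon_n,\epsilon_i-\epsilon_n,S_1')$ and $S_2=(\epsilon_{a_1}-\epsilon_n,\epsilon_{a_2}-\epsilon_n,\epsilon_j-\epsilon_n,S_2')$ with $S_1',S_2'\in\mathcal{S}_{3,n-1}$; the top indices $a_1,a_2$ agree because $S_1\approx S_2$, and then $S_1'\approx S_2'$. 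Using Proposition \ref{change-equality} to pass between $S_2$ and $\hat S_2:=(\epsilon_{a_1}-\epsilon_n,\epsilon_{a_2}-\epsilon_n,\epsilon_i-\epsilon_n,S_2')$, it suffices to match $e\cdot M$ for $S_1$ and $\hat S_2$. From Algorithm \ref{algorithm}, the weighting matrices split column by column as $\V_{S_1}(\overline{p_K})=(g(K),\V_{S_1'}(\overline{p_{\pi(K)}}))$ and $\V_{\hat S_2}(\overline{p_K})=(g(K),\V_{S_2'}(\overline{p_{\pi(K)}}))$, where the first block $g(K)\in\Z^3$ and the index map $\pi:I_{3,n}\to I_{3,n-1}$ depend only on the common triple $(a_1,a_2,i)$. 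One checks that an inequality-preserving $e_{S_1}=(e',e'')$ restricts on its second block to an inequality-preserving projection $e''$ for $S_1'$; the induction hypothesis provides an inequality-preserving $\tilde e''$ for $S_2'$ with $\tilde e''M_{S_2'}=e''M_{S_1'}$, and one sets $e_{\hat S_2}:=(e',\tilde e'')$. The equality $e_{\hat S_2}M_{\hat S_2}=e_{S_1}M_{S_1}$ is immediate from the column-wise splitting; that $e_{\hat S_2}$ preserves inequalities is checked via Remarks \ref{obs1} and \ref{obs2}: for each Plücker relation whose two monomials are compared, either the comparison already breaks inside the top triad, where $S_1$ and $\hat S_2$ carry identical data and one concludes from inequality-preservation of $e_{S_1}$ together with the equality of $e\cdot M$ just obtained, or the top triad is a tie and, by the reductions in the proof of Proposition \ref{binomial}, the comparison descends to monomials of a Plücker relation for $\Gr(3,n-1)$ read through $\V_{S_2'}$, which is handled by $\tilde e''$.

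It then remains to count the $\approx$-classes. An iterated sequence iterated from a PBW base for $\Gr(3,4)$ is the data of a permutation of the base triple together with, for each $m\in\{4,\dots,n-1\}$, an ordered triple of pairwise distinct elements of $[m]$; an $\approx$-class is precisely the choice, for each such $m$, of an ordered pair of pairwise distinct elements of $[m]$, and every choice of pairs is realized. Hence there are $\prod_{m=4}^{n-1}m(m-1)$ classes, which with $l=n-1-m$ equals $\prod_{l=0}^{n-5}(n-l-1)(n-l-2)$. Since $S\mapsto\In_{M_S}(\I_{3,n})$ factors through $\mathcal{S}_{3,n}/\approx$, its image has at most this cardinality, as claimed.

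The main obstacle I anticipate is the verification, inside the induction, that the reassembled projection $e_{\hat S_2}=(e',\tilde e'')$ preserves inequalities: this forces one to make fully precise, via Remarks \ref{obs1}, \ref{obs2} and the case distinctions in the proof of Proposition \ref{binomial}, exactly when a comparison of the two monomials of a Plücker relation is resolved within the top iteration (where the two sequences coincide) and when it is pushed down to $\Gr(3,n-1)$ (where the inductive hypothesis applies). A cleaner alternative would be to prove beforehand a version of Proposition \ref{change-equality} that allows changing the third index used at \emph{any} iteration step rather than only the topmost; this would let one dispense with the induction and simply compose the moves of Propositions \ref{perm-equality} and \ref{change-equality}, but the same Remarks remain the crux of that variant.
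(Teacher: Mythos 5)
Your proposal is correct and follows essentially the same route as the paper: show the map $S\mapsto \In_{M_S}(\I_{3,n})$ is constant on classes determined by the first two indices of each iteration via inequality-preserving projections with $e_{S_1}M_{S_1}=e_{S_2}M_{S_2}$ (Propositions \ref{perm-equality} and \ref{change-equality} together with the reduced Gr\"obner basis hypothesis and the proof of \cite[Lemma 3]{Bossinger_2020}), then count the classes as $\prod_{l=0}^{n-5}(n-l-1)(n-l-2)$. The only difference is that you spell out, by induction on $n$, how to change the third index at lower iteration levels, a step the paper treats as an immediate (iterated) application of Proposition \ref{change-equality}.
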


\subsection*{The Grassmannian \texorpdfstring{$\Gr(3,6)$}{Gr(3,6)}}
Let $S \in \mathcal{S}_{3,6}$. Let $R_{I,J}\neq 0$ be any Plücker relation and $e_S:\Z^{9}\rightarrow \Z$ a projection that preserves inequalities with respect to $S$. This means that for $s_1,s_2\in J$ it satisfies
\begin{equation*}
    \V_S\left(\overline{p_{I\cup s_1}p_{J\setminus s_1}}\right) \prec_{\Psi_S} \V_S\left(\overline{p_{I\cup s_2}p_{J\setminus s_2}}\right),
\end{equation*} then 
\begin{equation}\label{inequalities}
    e_S\left(\V_S(\overline{p_{I\cup s_1}p_{J\setminus s_1}})\right) < e_S\left(\V_S(\overline{p_{I\cup s_2}p_{J\setminus s_2}})\right).
\end{equation} Regarding the projection $e_S:\Z^9\rightarrow \Z$ as an element $e_S\in \Z^9$, the inequalities of the form \eqref{inequalities} define a polyhedral cone in $\Z^9$ (cf. \cite[Chapter 2]{sturmfels1996grobner}). By implementing Algorithm \ref{algorithm}, all the different inequalities obtained from the computation of initial forms of Plücker relations are calculated. In turn, by using \verb|polymake| (cf. \cite{polymake:2000}), this cone is computed and a vector in its relative interior is a projection $e_S$ that preserves inequalities with respect to $S$. For all projections $e_S$, define as before $w_S:=e_S M_S$.

Using the computations from the previous section, the total number of iterated sequences for $\Gr\left(3,6\right)$ is $\# \mathcal{S}_{3,6}=8640$. Repeating the process above for all different iterated sequences for $\Gr\left(3,6\right)$ yields the total number of weight vectors
\begin{equation}
    \#\left\lbrace w_S\in \R^{20} :S\in \mathcal{S}_{3,6} \right\rbrace = 240,
\end{equation} which was expected by the Proposition \ref{number_of_vectors}.

 The initial ideals $\In_{w_S}\left(\I_{3,6}\right)$ are computed using \verb|macaulay2| (cf. \cite{M2}), and the following facts are obtained
\begin{itemize}
    \item For all $S\in \mathcal{S}_{3,6}$, the minimum number of generators of $\In_{w_S}\left(\I_{3,6}\right)$ coincides with the minimum number of generators of $\I_{3,6}$. Thus, the set of Plücker relations is a reduced Gröbner basis for $\I_{3,6}$ with respect to $M_S$ and the next equality holds
    \begin{equation*}
        \In_{w_S}\left(\I_{3,6}\right) = \In_{M_S}\left(\I_{3,6}\right).
    \end{equation*}
    \item For all $S\in \mathcal{S}_{3,6}$, the ideal $\In_{w_S}\left(\I_{3,6}\right)$ is binomial and prime. By \cite[Lemma 1.1]{sturmfels1996equationsdefiningtoricvarieties}, $\In_{w_S}\left(\I_{3,6}\right)$ is toric, and thus defines a toric variety.
    \item If $S_1$ and $S_2$ are two iterated sequences that do not satisfy the hypotheses of Proposition \ref{change-equality} (this is, the first or second or both indices of an iteration differ), then 
    \begin{equation*}
        \In_{M_{S_2}}\left(\I_{3,6}\right) \neq \In_{M_{S_2}}\left(\I_{3,6}\right).
    \end{equation*} 
\end{itemize}

This proves the following Theorem.

\begin{thm}[First Classification]\label{prim-class}
    The assignment $\mathcal{S}_{3,6}\rightarrow \left\lbrace \textrm{initial ideals of }\I_{3,6} \right\rbrace$ given by
    \begin{equation*}
        S\mapsto \In_{M_S}\left(\I_{3,6}\right)
    \end{equation*} is an assignment
    \begin{equation*}
        \mathcal{S}_{3,6} \rightarrow \left\lbrace \textrm{toric initial ideals of }\I_{3,6} \right\rbrace
    \end{equation*} and the image has cardinality $\#\left\lbrace \In_{M_S}(\I_{3,6}):S\in \mathcal{S}_{3,6}\right\rbrace = 240$.
\end{thm}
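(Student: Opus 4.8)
The plan is to combine the structural results already established—Propositions \ref{binomial}, \ref{perm-equality}, \ref{change-equality} and \ref{number_of_vectors}—with explicit finite computations over $\mathcal{S}_{3,6}$. First I would check that the assignment is well-defined. For each iterated sequence $S$ the lowest-term valuation $\V_S$ produces the weighting matrix $M_S$, and by \cite[Theorem 1]{Bossinger:2021} together with the Fundamental Theorem of Tropical Geometry there is a cone $C_S\subset \trop(\Gr(3,6))$ with $\In_{M_S}(\I_{3,6})=\In_{C_S}(\I_{3,6})$; in particular $\In_{M_S}(\I_{3,6})$ is a genuine initial ideal of $\I_{3,6}$, so $S\mapsto \In_{M_S}(\I_{3,6})$ does land in the set of initial ideals.

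Next I would show the image consists of \emph{toric} ideals. By Proposition \ref{binomial} the initial form of every Plücker relation $R_{I,J}$ with respect to $M_S$ is binomial, so if the Plücker relations form a Gröbner basis of $\I_{3,6}$ with respect to $M_S$ then $\In_{M_S}(\I_{3,6})$ is generated by these binomials and is binomial. The Gröbner-basis property is verified computationally in \verb|macaulay2|: one checks that for every $S\in\mathcal{S}_{3,6}$ the minimal number of generators of $\In_{w_S}(\I_{3,6})$ equals that of $\I_{3,6}$, which simultaneously gives $\In_{w_S}(\I_{3,6})=\In_{M_S}(\I_{3,6})$ and shows $\{R_{I,J}\neq 0\}$ is a reduced Gröbner basis. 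One then checks, again in \verb|macaulay2|, that each such ideal is prime; by \cite[Lemma 1.1]{sturmfels1996equationsdefiningtoricvarieties} a prime binomial ideal is toric.

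It remains to pin down the cardinality. For the upper bound, the hypothesis of Proposition \ref{number_of_vectors}—that $\{R_{I,J}\neq 0: I\in I_{2,6}, J\in I_{4,6}\}$ is a reduced Gröbner basis with respect to every $M_S$—is exactly the computation just described, so Proposition \ref{number_of_vectors} yields $\#\{\In_{M_S}(\I_{3,6}):S\in\mathcal{S}_{3,6}\}\leq \prod_{l=0}^{1}(6-l-1)(6-l-2)=5\cdot 4\cdot 4\cdot 3=240$. For the lower bound I would, using \verb|polymake|, compute for each $S$ the polyhedral cone cut out in $\Z^9$ by the inequalities \eqref{inequalities} produced by Algorithm \ref{algorithm}, pick a relative-interior point $e_S$, set $w_S=e_S M_S$, and enumerate the resulting $240$ vectors in $\R^{20}$; then compute $\In_{w_S}(\I_{3,6})$ in \verb|macaulay2| and verify that two iterated sequences which differ in the first or second index of some iteration give distinct initial ideals, so that the $240$ vectors yield $240$ pairwise distinct toric ideals. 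Together with Propositions \ref{perm-equality} and \ref{change-equality} (which collapse the $8640$ sequences onto these $240$ classes) this shows the image has cardinality exactly $240$.

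I expect the main obstacle to be purely computational: certifying, across all $8640$ sequences, that the Plücker relations genuinely form a reduced Gröbner basis with respect to every $M_S$—this is what licenses both the inequality-only description of $w_S$ and the application of Proposition \ref{number_of_vectors}—and certifying that the $240$ weight vectors do not collapse further. Both reduce to finite but sizeable Gröbner-basis computations; the practical key is to exploit the $S_6$-action and Propositions \ref{perm-equality}–\ref{change-equality} to reduce the verification to a small number of representatives and then propagate the conclusions.
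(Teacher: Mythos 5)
Your proposal follows essentially the same route as the paper: the paper also obtains well-definedness from \cite[Theorem 1]{Bossinger:2021}, uses Algorithm \ref{algorithm} and \verb|polymake| to produce the $240$ weight vectors $w_S=e_SM_S$ (with the count anticipated by Proposition \ref{number_of_vectors}), and then certifies in \verb|macaulay2| that the minimal number of generators matches (so the Plücker relations form a reduced Gröbner basis and $\In_{w_S}(\I_{3,6})=\In_{M_S}(\I_{3,6})$), that each ideal is binomial and prime, hence toric by \cite[Lemma 1.1]{sturmfels1996equationsdefiningtoricvarieties}, and that sequences differing in a first or second index give distinct ideals. Your argument is correct and matches the paper's proof, including the point that the Gröbner-basis verification is what licenses the inequality-only construction of $w_S$ via \cite{Bossinger_2020}.
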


A direct consequence of this Theorem is related to the \emph{value semi-group} $S\left(A_{3,6},\V_S\right)$ (cf. e.g. \cite[$\S 2.1$]{Bossinger:2021}).

\begin{cor}
    For all $S\in \mathcal{S}_{3,6}$, the set $\left\lbrace \overline{p_I}:I\in I_{3,6}\right\rbrace$ forms a Khovanskii basis of $\left(A_{3,6},\V_S\right)$.
\end{cor}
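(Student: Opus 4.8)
The claim follows from Theorem \ref{prim-class} together with the standard relationship between Gröbner and Khovanskii degenerations for full-rank valuations, and the plan is to make that relationship explicit. Recall that $\{\overline{p_I} : I\in I_{3,6}\}$ is a Khovanskii basis of $(A_{3,6},\V_S)$ precisely when the value semigroup $S(A_{3,6},\V_S)$ is generated, as a monoid, by the finite set $\{\V_S(\overline{p_I}) : I\in I_{3,6}\}$; equivalently, when the associated graded algebra $\gr_{\V_S}(A_{3,6})$ is generated as a $\C$-algebra by the initial forms $\In_{\V_S}(\overline{p_I})$. So the goal is to upgrade the primality of $\In_{M_S}(\I_{3,6})$ supplied by Theorem \ref{prim-class} to this generation statement.

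First I would record the natural $\C$-algebra homomorphism
\[
\varphi_S:\ \C[p_K:K\in I_{3,6}]\big/\In_{M_S}(\I_{3,6})\ \longrightarrow\ \gr_{\V_S}(A_{3,6}),\qquad p_K\longmapsto \In_{\V_S}(\overline{p_K}).
\]
It is well defined because, by the usual argument, the initial form (with respect to $M_S$, which by Corollary \ref{only-depends-on-lex} and Remark \ref{min-max} computes $\V_S$) of any element of $\I_{3,6}$ maps to $0$ in $\gr_{\V_S}(A_{3,6})$, i.e.\ $\In_{M_S}(\I_{3,6})$ is contained in the kernel of $p_K\mapsto \In_{\V_S}(\overline{p_K})$; it is surjective because the Plücker coordinates generate $A_{3,6}$. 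Next I would compare Hilbert functions: since by Theorem \ref{prim-class} the Plücker relations form a reduced Gröbner basis of $\I_{3,6}$ with respect to $M_S$, the source of $\varphi_S$ has the same Hilbert function as $A_{3,6}$; and because $\V_S$ is a full-rank valuation, $\gr_{\V_S}(A_{3,6})$ also has the same Hilbert function as $A_{3,6}$ (each graded piece carries a finite filtration whose successive quotients assemble the corresponding graded piece of $\gr_{\V_S}$). Hence $\varphi_S$ is a surjection of finitely generated $\C$-algebras of equal Krull dimension. Finally, by Theorem \ref{prim-class} the ideal $\In_{M_S}(\I_{3,6})$ is toric, in particular prime, so the source of $\varphi_S$ is a domain; a surjection of affine domains over a field of the same Krull dimension is an isomorphism, since its kernel is a height-zero prime of a domain and is therefore $(0)$.

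Consequently $\gr_{\V_S}(A_{3,6})\cong \C[p_K:K\in I_{3,6}]/\In_{M_S}(\I_{3,6})$ is generated as a $\C$-algebra by the classes of the $p_K$, i.e.\ by the initial forms $\In_{\V_S}(\overline{p_K})$; since $\V_S$ has one-dimensional leaves, this forces $S(A_{3,6},\V_S)$ to be generated as a monoid by $\{\V_S(\overline{p_I}):I\in I_{3,6}\}$, which is exactly the assertion that $\{\overline{p_I}:I\in I_{3,6}\}$ is a Khovanskii basis of $(A_{3,6},\V_S)$. Alternatively, one may simply invoke \cite[Theorem 1]{Bossinger:2021} (cf.\ also \cite{bossinger2023surveytoricdegenerationsprojective}): for an iterated sequence $S$, the Plücker coordinates form a Khovanskii basis of $(A_{3,6},\V_S)$ if and only if $\In_{M_S}(\I_{3,6})$ is prime, and primality holds for every $S\in\mathcal{S}_{3,6}$ by Theorem \ref{prim-class}. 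The only input specific to $\Gr(3,6)$ is this primality, which is precisely what the computational classification behind Theorem \ref{prim-class} establishes; so the main obstacle has already been cleared there, and what remains here is the routine homological bookkeeping above.
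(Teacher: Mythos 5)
There is a genuine gap, and it sits exactly at the step you dismiss in one clause: the surjectivity of $\varphi_S$. The fact that the Plücker coordinates generate $A_{3,6}$ does \emph{not} imply that their initial forms generate $\operatorname{gr}_{\V_S}(A_{3,6})$ --- that implication is precisely the Khovanskii-basis property you are trying to prove, so your main argument is circular. For a filtered algebra the associated graded of a generating set need not generate: take $A=\C[t^4,\,t^6+t^7]\subset\C[t]$ with the valuation $\operatorname{ord}_t$; the generators have values $4$ and $6$, but $(t^6+t^7)^2-(t^4)^3=2t^{13}+t^{14}$ has value $13\notin\langle 4,6\rangle$, so the initial forms of the generators do not generate the associated graded, even though the generators generate $A$, and the analogous map $\varphi$ is not surjective. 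A symptom of the circularity in your write-up is that, once surjectivity and the two Hilbert-function equalities are granted, the conclusion follows with no use of primality at all --- yet primality of $\In_{M_S}(\I_{3,6})$ is exactly the hypothesis that separates the true statement from the counterexample above (where the initial ideal is generated by $p_2^2-p_1^3$ and is not prime). The well-definedness of $\varphi_S$ and the Hilbert-function comparisons are fine; surjectivity is the missing content, and proving it requires the quasi-valuation/primality argument, not the generation of $A_{3,6}$.

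Your closing ``alternative'' is in substance the paper's actual proof, but with the wrong citation: the criterion ``for a full-rank valuation, the value semigroup is generated by the values of the generators if and only if the initial ideal with respect to the weighting matrix is prime'' is \cite[Theorem 1]{Bossinger_2020}, not \cite[Theorem 1]{Bossinger:2021} (the latter, as used in this paper, produces the weight vector $w_S\in\trop(\Gr(3,n))$ with $\In_{w_S}(\I_{3,n})=\In_{M_S}(\I_{3,n})$; only the full-rank property of $\V_S$ is extracted from its proof). The paper's proof is exactly this three-line assembly: full rank of $\V_S$ from the proof of \cite[Theorem 1]{Bossinger:2021}, primality of $\In_{M_S}(\I_{3,6})$ from Theorem \ref{prim-class}, and then \cite[Theorem 1]{Bossinger_2020} to conclude that $S\left(A_{3,6},\V_S\right)$ is generated by $\left\lbrace\V_S(\overline{p_I}):I\in I_{3,6}\right\rbrace$. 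Replace your homological argument by that citation (or by an honest proof of the prime-implies-generated implication via the quasi-valuation induced by $M_S$), and the corollary is complete.
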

\begin{proof}
    Let $S\in \mathcal{S}_{3,6}$. By the proof of \cite[Theorem 1]{Bossinger:2021}, the valuation $\V_S:A_{3,6}\setminus \lbrace 0\rbrace \rightarrow \left(\Z^{9},\preceq_{\Psi_S}\right)$ is a full-rank valuation. By Theorem \ref{prim-class}, the initial ideal $\In_{M_S}\left(\I_{3,6}\right)$ is prime. By \cite[Theorem 1]{Bossinger_2020}, the value semi-group $S\left(A_{3,6},\V_S\right)$ is generated by $\left\lbrace\V_S( \overline{p_I}):I\in I_{3,6}\right\rbrace$.
\end{proof}

By Theorem \ref{prim-class}, the notation can be simplified: consider an iterated sequence 
\begin{equation*}
    S = \left(\epsilon_{i_1}-\epsilon_6,\epsilon_{i_2}-\epsilon_6,\epsilon_{i_3}-\epsilon_6,\epsilon_{j_1}-\epsilon_5,\epsilon_{j_2}-\epsilon_5,\epsilon_{j_3}-\epsilon_5,\beta_1,\beta_2,\beta_3\right).
\end{equation*} Since the initial ideal $\In_{M_S}\left(\I_{3,6}\right)$ only depends on the first two indices of each iteration, the initial ideal will be written 
\begin{equation}
    I_{(i_1,i_2;j_1,j_2)} := \In_{M_S}\left(\I_{3,6}\right).
\end{equation}

The next step in the classification of the initial ideals considers the action of a subgroup of $\operatorname{Aut}\left(\I_{3,6}\right)$. Consider simple transpositions $s_i=\begin{pmatrix} i & i+1\end{pmatrix}\in S_6$. The action of $s_i$ on $[6]$ can be extended to $I_{3,6}$ as $s_i.(i_1,i_2,i_3)= (s_i(i_1), s_i(i_2), s_i(i_3))$. They can further be extended to Plücker variables as
\begin{equation*}
    s_i.p_{(i_1,i_2,i_3)} = \left\lbrace \begin{matrix} p_{s_i.(i_1,i_2,i_3)} & \textrm{if } (i_1,i_2) \neq (i,i+1) \textrm{ and } (i_2,i_3) \neq (i,i+1)\\
    -p_{(i_1,i_2,i_3)} &  \textrm{else}
    \end{matrix}\right. ,
\end{equation*} and finally extended as an isomorphism of $\I_{3,6}$. Consider the following subgroup of $\operatorname{Aut}\left(\I_{3,6}\right)$
\begin{equation*}
    G:=\left\langle s_i:i\in [5] \right\rangle_{\operatorname{Aut}\left(\I_{3,6}\right)}.
\end{equation*} Under the action of this group, two ideals $\In_{M_{S_1}}\left(\I_{3,6}\right)$ and $\In_{M_{S_2}}\left(\I_{3,6}\right)$ are equivalent if there exists $g\in G$ such that $\In_{M_{S_1}}\left(\I_{3,6}\right) = g\left(\In_{M_{S_1}}(\I_{3,6})\right)$; an equivalence class is called a $G$-orbit. Notice that this means that two ideals in the same $G$-orbit are isomorphic, and thus, the toric varieties they define are isomorphic. These isomorphisms are written in \verb|macaulay2| to compute the $G$-orbits and they yield the following classification up to the $G$-action.

\begin{thm}[Partial Classification]\label{second_class}
    Consider $\mathbb{I}_{3,6} := \lbrace \In_{M_S}(\I_{3,6}):S\in \mathcal{S}_{3,6}\rbrace$. The $G$-orbits of $\mathbb{I}_{3,6}$ intersected with $\mathbb{I}_{3,6}$ are
    \begin{align*}
    O_{1}\cap \mathbb{I}_{3,6} &:= \left\lbrace I_{(s_1,s_2;r,k)} : k\in [4]  \wedge (\lbrace s_1,s_2\rbrace = \lbrace r,5 \rbrace \textrm{ or } \lbrace s_1,s_2\rbrace = [5]\setminus \lbrace r,k,M_k  \rbrace ) \right\rbrace,\\
    O_{2} \cap \mathbb{I}_{3,6}&:= \left\lbrace I_{(k,s_1;s_2,k)} :k\in [4], s_1 \in [5]\setminus \lbrace k\rbrace \, \wedge\, s_2 \in [4]\setminus \lbrace k\rbrace  \right\rbrace,\\
    O_{3} \cap \mathbb{I}_{3,6}&:= \left\lbrace I_{(s_1,k;s_2,k)} : s_1 \in [5]\setminus \lbrace k\rbrace \, \wedge\, s_2 \in [4]\setminus \lbrace k\rbrace  \right\rbrace,\\
    O_4 \cap \mathbb{I}_{3,6} &:= \mathbb{I}_{3,6} \setminus \left(\bigcup_{i=1,2,3} O_i\right).
\end{align*} These intersections have the following cardinalities and are listed with its corresponding isomorphism class as described in \cite[$\S5$]{speyer2003tropicalgrassmannian}
\begin{table}[H]
    \centering
    \begin{tabular}{|c|c|c|}
        \hline
        Orbit & $\#$ & Isomorphism class \\
        \hline
        $O_1\cap\mathbb{I}_{3,6}$ & $48$ & $ EEFF1 $\\
        \hline
        $O_2\cap\mathbb{I}_{3,6}$ & $48$ & $EFFG$ \\
        \hline 
        $O_3\cap\mathbb{I}_{3,6}$ & $48$ & $EEFF2$\\
        \hline
        $O_4\cap\mathbb{I}_{3,6}$ & $96$ & $EEFG$\\
        \hline
    \end{tabular}
    \caption{Classification of the $G$-orbits in $\mathbb{I}_{3,6}$}
    \label{isom-class}
\end{table}
\end{thm}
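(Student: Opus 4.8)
The plan is to turn the statement into a finite machine computation over the $240$ ideals of $\mathbb{I}_{3,6}$, organised around the parametrisation $S\mapsto I_{(i_1,i_2;j_1,j_2)}$ provided by Theorem~\ref{prim-class}. First I would make the $G$-action explicit at the level of ideals: each generator $s_i\in G$ acts on $\C[p_K:K\in I_{3,6}]$ by the signed permutation of Plücker variables described above, and hence on initial ideals by $g\cdot\In_{M_S}(\I_{3,6})$. A signed coordinate permutation carries binomials to binomials, so $g\cdot\In_{M_S}(\I_{3,6})$ is again a toric ideal; the point is to test whether it equals $\In_{M_{S'}}(\I_{3,6})$ for some iterated $S'$, i.e. whether it lies back in $\mathbb{I}_{3,6}$. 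Building the partition of $\mathbb{I}_{3,6}$ into ``$G$-orbit intersected with $\mathbb{I}_{3,6}$'' classes then amounts to a union--find / breadth-first search on the $240$ vertices $(i_1,i_2;j_1,j_2)$, with an edge between two vertices whenever some $s_i$ sends one ideal to the other; this is precisely the \verb|macaulay2| computation referred to before the statement.

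Running this search produces a partition of $\mathbb{I}_{3,6}$ into connected components, and I would check that there are exactly four, of sizes $48$, $48$, $48$, $96$ (note $48+48+48+96=240$). Next I would verify that these four components coincide with the explicitly described sets $O_1\cap\mathbb{I}_{3,6},\dots,O_4\cap\mathbb{I}_{3,6}$: this is a finite test running over the $\le 240$ tuples, matching each against the combinatorial conditions defining $O_1$, $O_2$, $O_3$ (the configurations in which the unordered pair $\{s_1,s_2\}$, respectively the positions of the indices $r$ and $k$, satisfy a prescribed coincidence, using the auxiliary index appearing in the description of $O_1$) and putting everything else into $O_4$. The consistency of this bookkeeping with the orbit search --- in particular that the $96$ tuples outside $O_1\cup O_2\cup O_3$ form a single component rather than several --- is exactly where the machine is indispensable; one certifies it by extracting from the same computation an explicit element of $G$ joining any two of those $96$ tuples.

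For the isomorphism classes I would use that, by \cite[Theorem~1]{Bossinger:2021} together with the Fundamental Theorem of Tropical Geometry, each $w_S=e_SM_S$ lies in $\trop(\Gr(3,6))$ and $\In_{w_S}(\I_{3,6})=\In_{M_S}(\I_{3,6})$ is the initial ideal attached to a maximal cone of the tropical Grassmannian; by Speyer--Sturmfels \cite[\S5]{speyer2003tropicalgrassmannian} every such ideal is one of the seven combinatorial types they list. For one representative ideal per component I would compute a complete discrete invariant of the associated projective toric variety --- most directly the combinatorial pattern of the binomial generators (which products $p_Kp_L$ get identified), equivalently the $f$-vector and normalised volume of the defining polytope, or the Hilbert series of the toric quotient --- and match it against those seven types, identifying the four components with $EEFF1$, $EFFG$, $EEFF2$, $EEFG$. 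Since ideals in the same $G$-orbit differ by an automorphism of $\I_{3,6}$ coming from a projective linear automorphism of $\P^{19}$, they define isomorphic toric varieties, so the isomorphism class is constant on each $O_i\cap\mathbb{I}_{3,6}$; conversely the four labels being pairwise distinct re-proves that the four classes are genuinely different $G$-orbits.

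The main obstacle I anticipate is the bookkeeping forced by the fact that $G\cong S_6$ does not preserve $\mathbb{I}_{3,6}$: the iteration singles out the indices $5$ and $6$, which a general permutation scrambles, so ``$G$-orbit intersected with $\mathbb{I}_{3,6}$'' is not a priori a transitive $G$-set and the four sets $O_i\cap\mathbb{I}_{3,6}$ must be shown to be exactly the classes of the relation ``$I\sim I'$ iff $I'=g\cdot I$ for some $g\in G$'' restricted to $\mathbb{I}_{3,6}$. Making the explicit descriptions of $O_1,O_2,O_3$ (and hence the cardinalities $48,48,48$) agree with what the orbit search outputs, and nailing down the stray notation in them, is the delicate part; the rest is a routine, if sizeable, \verb|macaulay2| and \verb|polymake| run already set up in the preceding subsection.
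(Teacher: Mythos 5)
Your proposal is correct and is essentially the paper's own argument: the theorem is established by exactly the finite computation you describe, namely enumerating the $240$ ideals $I_{(i_1,i_2;j_1,j_2)}$ from Theorem~\ref{prim-class}, implementing the signed-permutation action of the $s_i$ in \verb|macaulay2| to group them into $G$-orbit intersections with $\mathbb{I}_{3,6}$, and matching representatives against the seven cone types of \cite[$\S 5$]{speyer2003tropicalgrassmannian}. The one point to keep in mind (which you already flag, and which the paper's remark after the theorem makes explicit) is that some $s_i$-images leave $\mathbb{I}_{3,6}$, so the classes must be computed as full $G$-orbits intersected with $\mathbb{I}_{3,6}$ rather than as connected components of generator moves staying inside $\mathbb{I}_{3,6}$.
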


\begin{rem}
    There exist initial ideals $\In_{M_S}\left(\I_{3,6}\right)$ whose image under one of the simple transpositions is not contained in $\mathbb{I}_{3,6}$. Therefore, the intersections $O\cap \mathbb{I}_{3,6}$ in the previous theorem are necessary. 
\end{rem}

\subsection*{Toric degenerations}
The main results of the previous sections can be briefly restated as follows: for every iterated sequence $S\in \mathcal{S}_{3,6}$, the ideal $\In_{M_S}\left(\I_{3,6}\right)$ is toric and up to strict equality, there are 240 different toric initial ideals labeled by the first two indices of each iteration (Theorem \ref{prim-class}); up to the action of $S_6\leq \operatorname{Aut}\left(\I_{3,6}\right)$, there are four different orbits (Theorem \ref{second_class}); equivalently, the set 
\begin{equation*}
    \mathbb{V}_{3,6} := \left\lbrace \operatorname{Spec}(\C[p_K:K\in I_{3,6}]/\In_{M_S}(\I_{3,6})):S\in \mathcal{S}_{3,6}\right\rbrace
\end{equation*}    
contains, up to isomorphism induced by $S_6$, four different toric varieties, classified according to Theorem \ref{second_class}. As was mentioned in the Introduction, one of the main reasons to study birational sequences and, in particular, iterated sequences, is to construct toric degenerations arising as Gröbner degenerations. The description of this construction can be found, for example, in \cite{Bossinger:2021,bossinger2023surveytoricdegenerationsprojective}. It will be briefly sketched here in the case of the Grassmannian $\Gr\left(3,6\right)$.

Let $w\in \trop\left(\Gr(3,6)\right)$ be an arbitrary weight vector and $\I_{3,6}$ the Plücker ideal.  For $t\in \C$, consider the following family of ideals
\begin{equation*}
    \Tilde{I}_t := \left\langle   t^{-\min_{u}\lbrace u\cdot w \rbrace}f\left(t^{w_{I_1}}p_{I_1},\dots,t^{w_{I_N}}p_{I_N}\right): f=\sum a_u p^u\in \I_{3,6}\right\rangle \subset \C\left[t,p_K^{\pm 1}:K\in I_{3,6}\right].
\end{equation*}  This describes a flat family over $\C$ (cf. \cite[Section 15.8]{Eisenbud:CommAlg}). Consider the following three varieties
\begin{equation*}
    \operatorname{Spec}\left(A_{3,6}\right), \quad \operatorname{Spec}\left(\C\left[t,p_K:K\in I_{3,6}\right]/\Tilde{I}_t\right),\quad \operatorname{Spec}\left(\C[p_K:K\in I_{3,6}]/\In_w(\I_{3,6})\right).
\end{equation*} Write $I_s := \left.\Tilde{I}_t\right|_{t=s}$. If $s\neq 0$, there is an automorphism of $\C\left[p_K:K\in I_{3,6}\right]$ sending $I_s$ to $\I_{3,6}$. Then $\operatorname{Spec}\left(\C\left[t,p_K:K\in I_{3,6}\right]/\Tilde{I}_t\right)$ defines a degeneration of $\operatorname{Spec}(A_{3,6})$, called the \emph{Gröbner degeneration}. If the ideal $\In_w\left(\I_{3,6}\right)$ is toric, then the degeneration is called \emph{toric}.

The consequence of Theorems \ref{prim-class} and \ref{second_class} is the following corollary.

\begin{cor}\label{class_toric}
    Every iterated sequence $S\in \mathcal{S}_{3,6}$ induces a toric degeneration of $\Gr\left(3,6\right)$. These degenerations are, up to the action of $S_6\leq \operatorname{Aut}\left(\I_{3,6}\right)$, classified according to the classification in Theorem \ref{second_class}.
\end{cor}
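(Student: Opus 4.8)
The plan is to assemble the statement from Theorems \ref{prim-class} and \ref{second_class} together with the Gröbner-degeneration machinery recalled just above. First I would fix $S\in\mathcal{S}_{3,6}$ and the weight vector $w_S=e_SM_S\in\trop(\Gr(3,6))$ produced in the previous subsection, for which $\In_{w_S}(\I_{3,6})=\In_{M_S}(\I_{3,6})$. Forming the family $\Tilde{I}_t\subset\C[t,p_K^{\pm1}:K\in I_{3,6}]$ exactly as displayed, flatness over $\operatorname{Spec}\C[t]$ is the content of \cite[Section 15.8]{Eisenbud:CommAlg}; restricting to $t=s\neq 0$ one has an automorphism of $\C[p_K:K\in I_{3,6}]$ (rescaling each $p_{I_j}$ by $s^{w_{I_j}}$) carrying $I_s$ to $\I_{3,6}$, so every fiber over $t\neq 0$ is isomorphic to $\operatorname{Spec}(A_{3,6})$, while the fiber over $t=0$ is $\operatorname{Spec}(\C[p_K:K\in I_{3,6}]/\In_{w_S}(\I_{3,6}))$.

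The second step is to see that this special fiber is toric: by Theorem \ref{prim-class} the ideal $\In_{M_S}(\I_{3,6})=\In_{w_S}(\I_{3,6})$ is prime and binomial, hence by \cite[Lemma 1.1]{sturmfels1996equationsdefiningtoricvarieties} it is a toric ideal, so $\operatorname{Spec}(\C[p_K:K\in I_{3,6}]/\In_{w_S}(\I_{3,6}))$ is a toric variety. Therefore $\Tilde{I}_t$ defines a toric degeneration of $\Gr(3,6)$ for every $S\in\mathcal{S}_{3,6}$, which proves the first assertion; passing to $\operatorname{Proj}$ with respect to the Plücker grading yields the degeneration of $\Gr(3,6)$ itself.

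For the classification, I would use that $G=\langle s_i:i\in[5]\rangle$ is a copy of $S_6$ inside $\operatorname{Aut}(\I_{3,6})$, each $s_i$ acting by a graded automorphism of $\C[p_K:K\in I_{3,6}]$ preserving $\I_{3,6}$. Such an automorphism, extended by the identity in $t$, sends the family $\Tilde{I}_t$ attached to $S_1$ to the one attached to $S_2$ whenever $\In_{M_{S_1}}(\I_{3,6})$ and $\In_{M_{S_2}}(\I_{3,6})$ lie in the same $G$-orbit, hence induces an isomorphism of the two Gröbner degenerations, in particular of the two toric special fibers. By Theorem \ref{second_class} there are exactly four $G$-orbits in $\mathbb{I}_{3,6}$, whose special fibers realize the isomorphism types $EEFF1$, $EFFG$, $EEFF2$, $EEFG$ of \cite[\S5]{speyer2003tropicalgrassmannian}; since these four types are pairwise non-isomorphic, the toric degenerations arising from iterated sequences fall into exactly these four classes up to the $S_6$-action.

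The main point requiring care, rather than genuine difficulty, is uniformity: one needs $w_S\in\trop(\Gr(3,6))$ for \emph{all} $S$ (this is where Theorem \ref{prim-class}, resting on Proposition \ref{binomial} and the preceding check that the Plücker relations form a reduced Gröbner basis, is essential) so that flatness and the generic-fiber identification hold simultaneously across $\mathcal{S}_{3,6}$, and one needs the $G$-action to be by \emph{graded} ring automorphisms so that the induced maps are isomorphisms of the projective degenerations and not merely of the affine cones. The appeal to the distinctness of the Speyer–Sturmfels types $EEFF1,EFFG,EEFF2,EEFG$ is what guarantees the four classes are genuinely different and not an artifact of the orbit decomposition.
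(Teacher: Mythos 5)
Your proposal is correct and follows essentially the same route as the paper, which states the corollary as a direct consequence of Theorems \ref{prim-class} and \ref{second_class} together with the Gr\"obner degeneration construction sketched just before it; you merely fill in the details (flatness via \cite[Section 15.8]{Eisenbud:CommAlg}, toricness of the special fiber via \cite[Lemma 1.1]{sturmfels1996equationsdefiningtoricvarieties}, and transport of the classification along the $G$-action). The only slight overstatement is that a $g\in G$ need not carry the family $\Tilde{I}_t$ for $w_{S_1}$ literally onto the one for $w_{S_2}$ (the weight vectors need only lie in corresponding Gr\"obner cones), but since the classification concerns the isomorphism type of the toric special fibers, which depend only on the initial ideals, this does not affect the argument.
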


This paper concludes by noticing that the equivalences presented in \cite[Remark 1]{Bossinger:2021} for $\Gr\left(2,n\right)$ do not generalize when considering the Grassmannian $\Gr\left(3,6\right)$. By \cite[$\S 5$]{speyer2003tropicalgrassmannian}, there are 7 isomorphism classes of maximal cones in $\trop\left(\Gr(3,6)\right)$. By Theorem \ref{second_class}, only four of these classes ($EEFF1,EEFF2,EFFG,EEFG$) correspond to initial ideals induced by iterated sequences, so the function $\In_{M_S}\left(\I_{3,6}\right)\mapsto \In_{w_S}\left(\I_{3,6}\right)$ is injective but cannot be surjective. This is, there is no ``natural" arrow pointing left in the next diagram
\begin{equation*}
    \left\lbrace \begin{matrix}\textrm{toric degenerations of }\Gr(3,6)\\\textrm{induced by iterated sequences}\end{matrix}\right\rbrace \rightarrow \left\lbrace \begin{matrix}\textrm{toric degenerations of }\Gr(3,6)\\\textrm{induced by }\trop(\Gr(3,6))\end{matrix}\right\rbrace.
\end{equation*}   Furthermore, the classes $EEFF1,EEFF2,EFFG,EEFG,EEEG$, together with a class corresponding to an edge of the form $GG$, correspond to degenerations induced by plabic graphs (cf. \cite[Table 1]{bossinger2016toricdegenerationsgr2ngr36}), so there is an injective function 
\begin{equation*}
    \left\lbrace \begin{matrix}\textrm{toric degenerations of }\Gr(3,6)\\\textrm{induced by iterated sequences}\end{matrix}\right\rbrace \rightarrow \left\lbrace \begin{matrix}\textrm{toric degenerations of }\Gr(3,6)\\\textrm{induced by plabic graphs}\end{matrix}\right\rbrace.
\end{equation*} which cannot be surjective since there is no initial ideal $\In_{M_S}\left(\I_{3,n}\right)$ corresponding to the class $EEEG$ or to the class corresponding to an edge of the form $GG$.

\bibliographystyle{alpha}
\bibliography{biblio}

J. TORRES HENESTROZA: Facultad de Ciencias, Universidad Nacional Autónoma de México, Investigación Científica, C.U., Alcaldía Coyoacán, C.P. 04510. Ciudad de México, México.

\emph{E-mail address}: \verb|joaquinth@ciencias.unam.mx|

\end{document}